\numberwithin{equation}{section}
\newtheorem{remark}{Remark}
\newtheorem{theorem}{Theorem}[section]
\newtheorem{conjecture}[theorem]{Conjecture}
\newtheorem{definition}[theorem]{Definition}
\newtheorem{lemma}[theorem]{Lemma}
\newtheorem{proposition}[theorem]{Proposition}
\title{Induced saturation for complete bipartite posets}
\author{Dingyuan Liu}
\address{Dingyuan Liu \newline Karlsruhe Institute of Technology, Englerstraße 2, D-76131 Karlsruhe, Germany}
\email{liu@mathe.berlin}
\begin{document}
\maketitle

\vspace{-0.7em}
\begin{abstract}
Given $s,t\in\mathbb{N}$, a complete bipartite poset $\mathcal{K}_{s,t}$ is a poset whose Hasse diagram consists of $s$ pairwise incomparable vertices in the upper layer and $t$ pairwise incomparable vertices in the lower layer, such that every vertex in the upper layer is larger than all vertices in the lower layer. A family $\mathcal{F}\subseteq2^{[n]}$ is called induced $\mathcal{K}_{s,t}$-saturated if $(\mathcal{F},\subseteq)$ contains no induced copy of $\mathcal{K}_{s,t}$, whereas adding any set from $2^{[n]}\backslash\mathcal{F}$ to $\mathcal{F}$ creates an induced $\mathcal{K}_{s,t}$. Let $\mathrm{sat}^{*}(n,\mathcal{K}_{s,t})$ denote the smallest size of an induced $\mathcal{K}_{s,t}$-saturated family $\mathcal{F}\subseteq2^{[n]}$. It was conjectured that $\mathrm{sat}^{*}(n,\mathcal{K}_{s,t})$ is superlinear in $n$ for certain values of $s$ and $t$. In this paper, we show that $\mathrm{sat}^{*}(n,\mathcal{K}_{s,t})=O(n)$ for all fixed $s,t\in\mathbb{N}$. Moreover, we prove a linear lower bound on $\mathrm{sat}^{*}(n,\mathcal{P})$ for a large class of posets $\mathcal{P}$, particularly for $\mathcal{K}_{s,2}$ with $s\in\mathbb{N}$.
\end{abstract}

\section{Introduction}
A \textit{poset}\footnote{Throughout this paper we only consider finite posets.} is an ordered pair $(\mathcal{P},\preceq_{\mathcal{P}})$, abbreviated as $\mathcal{P}$ for convenience, where $\mathcal{P}$ is the ground set and $\preceq_{\mathcal{P}}$ is some partial order on $\mathcal{P}$. Two elements $x,y\in\mathcal{P}$ are \textit{comparable} if $x\preceq_{\mathcal{P}}y$ or $y\preceq_{\mathcal{P}}x$, otherwise we call them \textit{incomparable}. We say that a poset $\mathcal{Q}$ \textit{contains an (induced) copy of $\mathcal{P}$}, if there exists an injective function $f:\mathcal{P}\to\mathcal{Q}$, such that $f(x)\preceq_{\mathcal{Q}}f(y)$ if (and only if) $x\preceq_{\mathcal{P}}y$. Moreover, we say that $\mathcal{Q}$ is (\textit{induced}) \textit{$\mathcal{P}$-free}, if $\mathcal{Q}$ contains no (induced) copy of $\mathcal{P}$. Let $n\in\mathbb{N}$ and $\mathcal{P}$ be a fixed poset. A family $\mathcal{F}\subseteq2^{[n]}$ is called (\textit{induced}) \textit{$\mathcal{P}$-saturated}, if
\begin{itemize}
\item $(\mathcal{F},\subseteq)$ is (induced) $\mathcal{P}$-free, and
\item $\forall\,F\in2^{[n]}\backslash\mathcal{F}$, the poset $\left(\mathcal{F}\cup\{F\},\subseteq\right)$ contains an (induced) copy of $\mathcal{P}$.
\end{itemize} 
Determining the minimum size of an (induced) $\mathcal{P}$-saturated family is the so-called (\textit{induced}) \textit{poset saturation problem}. The \textit{saturation number} $\mathrm{sat}(n,\mathcal{P})$ is defined as the smallest size of a $\mathcal{P}$-saturated family $\mathcal{F}\subseteq2^{[n]}$. Similarly, the \textit{induced saturation number} $\mathrm{sat}^{*}(n,\mathcal{P})$ is equal to the smallest size of an induced $\mathcal{P}$-saturated family $\mathcal{F}\subseteq2^{[n]}$.

The study of the poset saturation problem was pioneered by Gerbner, Keszegh, Lemons, Palmer, P\'{a}lv\"{o}lgyi, and Patk\'{o}s~\cite{gerbner2013saturating}, see also~\cite{morrison2014sperner,keszegh2021induced,martin2024saturation} for various results. In particular, by analyzing a greedy colex process, Keszegh, Lemons, Martin, P\'{a}lv\"{o}lgyi, and Patk\'{o}s~\cite{keszegh2021induced} showed that $\mathrm{sat}(n,\mathcal{P})\leq2^{\lvert\mathcal{P}\rvert-2}$ for any fixed poset $\mathcal{P}$.

On the other hand, the systematic study of the induced poset saturation problem was initiated by Ferrara, Kay, Kramer, Martin, Reiniger, Smith, and Sullivan~\cite{ferrara2017saturation}. It has garnered extensive attention and subsequent research~\cite{ivan2020saturation,martin2020improved,keszegh2021induced,ivan2022minimal,freschi2023induced,bastide2023exact,bastide2023polynomial,martin2024induced} in recent years. Unlike $\mathrm{sat}(n,\mathcal{P})$, which is bounded for any fixed $\mathcal{P}$, the function $\mathrm{sat}^{*}(n,\mathcal{P})$ exhibits a dichotomy of behavior. Keszegh et al.~\cite{keszegh2021induced} showed that $\mathrm{sat}^{*}(n,\mathcal{P})$ is either upper bounded by some constant depending only on $\mathcal{P}$, or lower bounded by $\log_{2}{n}$. Furthermore, Keszegh et al.~\cite{keszegh2021induced} conjectured the following stronger dichotomy.
\begin{conjecture}[{\hspace{-0.3mm}\cite[Conjecture 1.4]{keszegh2021induced}}]
\label{cje1}
Let $\mathcal{P}$ be a fixed poset. Then either there exists some constant $C_{\mathcal{P}}$ with $\mathrm{sat}^{*}(n,\mathcal{P})\leq{C_{\mathcal{P}}}$, or $\mathrm{sat}^{*}(n,\mathcal{P})\geq{n+1}$ for all $n\in\mathbb{N}$.
\end{conjecture}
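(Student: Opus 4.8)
\emph{Overall strategy.} The plan is to prove Conjecture~\ref{cje1} through its logically equivalent \emph{local-to-global} form. Since $\mathrm{sat}^{*}(\cdot,\mathcal{P})$ is integer-valued, the second alternative of the conjecture fails exactly when $\mathrm{sat}^{*}(n_{0},\mathcal{P})\leqslant n_{0}$ for some $n_{0}$, so the conjecture is equivalent to the statement that \textit{if $\mathrm{sat}^{*}(n_{0},\mathcal{P})\leqslant n_{0}$ holds for even a single $n_{0}$, then $\mathrm{sat}^{*}(n,\mathcal{P})$ is bounded above by a constant depending only on $\mathcal{P}$}. I would prove this by splitting all posets into an \emph{easy} regime, where one writes down an explicit induced $\mathcal{P}$-saturated family of bounded size on $2^{[n]}$ for every $n$, and a \emph{hard} regime, where one shows that every induced $\mathcal{P}$-saturated family on $2^{[n]}$ has at least $n+1$ members. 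The dividing criterion I would try to make precise is structural — $\mathcal{P}$ is easy precisely when every sufficiently large antichain of $2^{[n]}$ extends to a \emph{constant-size} maximal induced $\mathcal{P}$-free family — which I expect to characterise the chains together with a short list of near-chains.

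\emph{The easy regime.} A clean prototype is the three-element chain $C_{3}$: the two-set family $\{\emptyset,[n]\}\subseteq 2^{[n]}$ has height $2$, hence contains no induced $C_{3}$, while adding any other set $F$ produces the chain $\emptyset\subsetneq F\subsetneq[n]$; thus $\mathrm{sat}^{*}(n,C_{3})=2$ for every $n$. I expect every chain $C_{k}$ to lie in the easy regime, with a bounded induced $C_{k}$-saturated family built from $\emptyset$, $[n]$, and a fixed number of ``spread-out'' intermediate sets arranged so that every missing set fits strictly between two consecutive existing levels. First I would run a finite case analysis of which small configurations in $2^{[n]}$ stay induced $\mathcal{P}$-saturated as $n\to\infty$, so as to pin down exactly which posets admit such scale-free bounded families; for each of them the first alternative of the conjecture then holds with an explicit $C_{\mathcal{P}}$, and the remaining bookkeeping should be routine.

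\emph{The hard regime.} For $\mathcal{P}$ outside the easy regime, let $\mathcal{F}\subseteq 2^{[n]}$ be induced $\mathcal{P}$-saturated; the target is $\lvert\mathcal{F}\rvert\geqslant n+1$. The starting point is the mechanism behind the $\log_{2}n$ bound of Keszegh et al.: the members of $\mathcal{F}$ sort $[n]$ into at most $2^{\lvert\mathcal{F}\rvert}$ \emph{twin classes} of coordinates lying in exactly the same sets of $\mathcal{F}$, a twin class of size at least $2$ would interact with saturation to reproduce an induced copy of $\mathcal{P}$ inside $\mathcal{F}$ itself after a twin-swap, so $\mathcal{F}$ has no such class, which forces $\lvert\mathcal{F}\rvert\geqslant\log_{2}n$. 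To push the threshold up to $n+1$ I would replace the global twin relation by a \emph{comparability-local} one: fixing a reference set $R\in\mathcal{F}$, call $i,j$ $R$-twins if they lie in the same members of $\mathcal{F}$ comparable to $R$, and aim to show that $\lvert\mathcal{F}\rvert\leqslant n$ forces some $R$ admitting an $R$-twin pair through which the copy of $\mathcal{P}$ created on adding a suitable set built from $i$ and $R$ can be re-routed into $\mathcal{F}$. As a concrete milestone I would carry this out in full for $\mathcal{P}=\mathcal{K}_{s,t}$, where induced $\mathcal{K}_{s,t}$-freeness is a bipartite-type restriction that should keep the re-routing manageable, and in particular for $\mathcal{K}_{s,2}$; together with the $O(n)$ construction promised in the abstract this would place $\mathcal{K}_{s,2}$ exactly on the boundary line $n+1$ of the conjecture.

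\emph{Main obstacle.} Essentially all the difficulty is concentrated in the hard regime at the threshold $\lvert\mathcal{F}\rvert\leqslant n$: the naive twin count is useless there — $n$ sets can genuinely separate all $n$ coordinates, as $\{\{1\},\dots,\{n\}\}$ shows — so the comparability-local refinement must both be made to work and be made to work \emph{uniformly over every poset in the hard regime}. I expect this to require a Ramsey- or sunflower-type extraction applied to the list of traces $\{\,F\cap N:F\in\mathcal{F}\,\}$ as $N$ ranges over a large twin-free coordinate set, isolating a bounded ``universal gadget'' — a constant-size sub-configuration of $\mathcal{F}$ carrying all of its $\mathcal{P}$-creating behaviour — after which either the re-routing above or the propagation implicit in the local-to-global reformulation closes the argument. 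Identifying the correct structural line between the two regimes, and proving this extraction at the sharp threshold, are the two points where I expect the genuine work to lie; the complete bipartite posets $\mathcal{K}_{s,t}$ are meant to serve as the first real test of the hard side of the dichotomy.
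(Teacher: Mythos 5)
The statement you are addressing is a \emph{conjecture} (Conjecture 1.4 of Keszegh, Lemons, Martin, P\'{a}lv\"{o}lgyi, and Patk\'{o}s), and the paper explicitly records that it remains open; the paper offers no proof of it, and neither do you. What you have written is a research programme, not a proof, and the two places where you yourself say ``the genuine work'' lies are exactly the open content of the conjecture. Concretely: (i) your dividing criterion between the ``easy'' and ``hard'' regimes is never made precise --- you propose to ``pin down exactly which posets admit scale-free bounded families'' by a finite case analysis, but no such characterisation is known, and there is no argument that the analysis is finite or that it terminates in a clean structural description; (ii) in the hard regime you correctly observe that the twin-class mechanism behind the $\log_{2}n$ bound collapses at the threshold $\lvert\mathcal{F}\rvert\leqslant n$, and your proposed fix (comparability-local twins plus a Ramsey/sunflower extraction of a ``universal gadget'') is stated only as a hope, with no lemma, no re-routing argument, and no reason the extraction should work uniformly over all posets. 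The best known general lower bound in the non-constant regime is $\min\{2\sqrt{n},\,n/2+1\}$ (Freschi, Piga, Sharifzadeh, Treglown), which is strictly weaker than the conjectured $n+1$, so your plan would have to beat the state of the art at its hardest point.

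It is also worth noting that the partial progress the paper actually proves toward this conjecture --- Theorem 3, giving $\mathrm{sat}^{*}(n,\mathcal{P})\geqslant n+1$ for every poset $\mathcal{P}$ with legs, hence for all $\mathcal{K}_{s,2}$ --- does not use twins at all. It proceeds by a direct injection $[n]\to\mathcal{F}\backslash\{\emptyset\}$: for each singleton $\{x\}\notin\mathcal{F}$ one takes the largest ``partner'' $C_{x}$ forming the legs of a created copy of $\mathcal{P}$ and shows $C_{x}\cup\{x\}\in\mathcal{F}$. If you want a concrete milestone for the hard regime, that legs-and-partners argument (and its limitation to posets with legs) is the right object to study; your $\mathcal{K}_{s,t}$ test case with $t\geqslant 3$ is not covered by it and is itself open on the lower-bound side.
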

Their conjecture, if true, would be the best possible. As shown in~\cite{ferrara2017saturation}, the induced saturation number for certain posets is exactly $n+1$. For example, let $\mathcal{P}$ consist of two incomparable elements. Then $\mathcal{F}\subseteq2^{[n]}$ is induced $\mathcal{P}$-saturated if and only if $\mathcal{F}$ is a maximal family whose members are pairwise comparable, hence, $\lvert\mathcal{F}\rvert=n+1$. Although Conjecture~\ref{cje1} remains open, the aforementioned dichotomy of $\mathrm{sat}^{*}(n,\mathcal{P})$ has been strengthened recently by Freschi, Piga, Sharifzadeh, and Treglown~\cite{freschi2023induced}, who showed that if $\mathrm{sat}^{*}(n,\mathcal{P})$ is not upper bounded by some constant, then $\mathrm{sat}^{*}(n,\mathcal{P})\geq\min\left\{2\sqrt{n},n/2+1\right\}$. Even more recently, Bastide, Groenland, Ivan, and Johnston~\cite{bastide2023polynomial} proved a polynomial upper bound on $\mathrm{sat}^{*}(n,\mathcal{P})$ for any fixed $\mathcal{P}$, where the degree of the polynomial depends on $\lvert\mathcal{P}\rvert$.

A natural way to visualize the posets is through their Hasse diagrams\footnote{Given a poset $\mathcal{P}$, the \textit{Hasse diagram} of $\mathcal{P}$ is obtained by representing each element of $\mathcal{P}$ as a vertex in the plane, and drawing a line segment that goes upwards from $x$ to $y$ whenever $x\preceq_{\mathcal{P}}y$ and there exists no other element $z$ with $x\preceq_{\mathcal{P}}z\preceq_{\mathcal{P}}y$.}. Since every poset can be uniquely determined by the Hasse diagram up to isomorphism, it is easy to see that $\mathcal{F}\subseteq2^{[n]}$ contains an induced copy of $\mathcal{P}$ if and only if there exists $\mathcal{F}'\subseteq\mathcal{F}$, such that $\left(\mathcal{F}',\subseteq\right)$ can be represented by the same Hasse diagram as $\mathcal{P}$. From now on we shall identify the posets with their Hasse diagrams.

The primary objective of this paper is to study the induced saturation problem for complete bipartite posets. Given $s,t\in\mathbb{N}$, a \textit{complete bipartite poset}, denoted by $\mathcal{K}_{s,t}$, is a poset, whose Hasse diagram consists of $s$ pairwise incomparable vertices in the upper layer and $t$ pairwise incomparable vertices in the lower layer, such that every vertex in the upper layer is larger than all vertices in the lower layer (see Figure~\ref{fig1} for an illustration). Note that $\mathrm{sat}^{*}(n,\mathcal{K}_{s,t})=\mathrm{sat}^{*}(n,\mathcal{K}_{t,s})$ holds due to symmetry, without loss of generality we can assume $s\geq{t}$. Moreover, since $\mathcal{K}_{1,1}$ is a poset of two comparable elements, whose induced saturation number is simply $1$, we exclude this trivial case in later discussions.
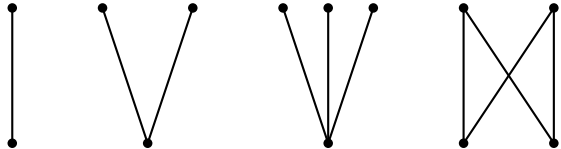
\begin{figure}[H]
\centering
\begin{tikzpicture}[scale=3/5]
\fill (-3,0) circle (3pt);
\fill (-3,-3) circle (3pt);
\draw [thick] (-3,0) -- (-3,-3);
\fill (-1,0) circle (3pt);
\fill (1,0) circle (3pt);
\fill (0,-3) circle (3pt);
\draw [thick] (-1,0) -- (0,-3);
\draw [thick] (1,0) -- (0,-3);
\fill (3,0) circle (3pt);
\fill (4,0) circle (3pt);
\fill (5,0) circle (3pt);
\fill (4,-3) circle (3pt);
\draw [thick] (3,0) -- (4,-3);
\draw [thick] (4,0) -- (4,-3);
\draw [thick] (5,0) -- (4,-3);
\fill (7,0) circle (3pt);
\fill (9,0) circle (3pt);
\fill (7,-3) circle (3pt);
\fill (9,-3) circle (3pt);
\draw [thick] (7,0) -- (7,-3);
\draw [thick] (7,0) -- (9,-3);
\draw [thick] (9,0) -- (7,-3);
\draw [thick] (9,0) -- (9,-3);
\end{tikzpicture}
\caption{The Hasse diagrams of $\mathcal{K}_{1,1}$, $\mathcal{K}_{2,1}$, $\mathcal{K}_{3,1}$, and $\mathcal{K}_{2,2}$.}
\label{fig1}
\end{figure}
As one of the most fundamental types of posets, $\mathcal{K}_{s,t}$ has been studied in the initial work of induced poset saturation. Ferrara et al.~\cite{ferrara2017saturation} showed that $\mathrm{sat}^{*}(n,\mathcal{K}_{2,1})=n+1$ and obtained a linear upper bound
\begin{equation*}
\mathrm{sat}^{*}(n,\mathcal{K}_{s,1})\leq(s-1)n-s+3
\end{equation*} 
for all $n\geq{s}\geq2$. Furthermore, Ferrara et al.~\cite{ferrara2017saturation} showed that $\mathrm{sat}^{*}(n,\mathcal{K}_{2,2})=O(n^{2})$ and conjectured that $\mathrm{sat}^{*}(n,\mathcal{K}_{2,2})=\Theta(n^{2})$. Later Ivan~\cite{ivan2020saturation} proved that $\mathrm{sat}^{*}(n,\mathcal{K}_{s,2})=O(n^{s})$ and $\mathrm{sat}^{*}(n,\mathcal{K}_{s,s})=O(n^{2s-2})$ for all $s\geq2$. In addition, she proposed the following conjecture, which is a strengthening of that by Ferrara et al.~\cite{ferrara2017saturation}.
\begin{conjecture}[{\hspace{-0.3mm}\cite[Conjecture 8 \& Conjecture 9]{ivan2020saturation}}]
\label{cje2}
For all $s\geq2$,
\begin{equation*}
\mathrm{sat}^{*}(n,\mathcal{K}_{s,2})=\Theta(n^{s})\quad\text{and}\quad\mathrm{sat}^{*}(n,\mathcal{K}_{s,s})=\Theta(n^{2s-2}).
\end{equation*}
\end{conjecture}
However, Keszegh et al.~\cite{keszegh2021induced} recently showed that when $n\geq3$, an induced $\mathcal{K}_{2,2}$-saturated family $\mathcal{F}\subseteq2^{[n]}$ of size $6n-10$ can be found through a greedy colex process. This refutes Conjecture~\ref{cje2} in a special case and suggests that Conjecture~\ref{cje2} is probably false for all $s\geq2$ (as remarked in~\cite{ivan2020saturation}). The main result of this paper is the following, which disproves Conjecture~\ref{cje2} in its entirety. We give a linear upper bound on $\mathrm{sat}^{*}(n,\mathcal{K}_{s,t})$ for all fixed $s\geq{t}\geq2$, where we construct an explicit induced $\mathcal{K}_{s,t}$-saturated family using a structure that we call the ``lantern'' (see Figure~\ref{fig2}).
\begin{theorem}
\label{thm1}
Let $n,s,t\in\mathbb{N}$ with $s\geq{t}\geq2$ and $n\geq{2s+t-1}$. Then
\begin{equation*}
\mathrm{sat}^{*}\left(n,\mathcal{K}_{s,t}\right)\leq\left(\binom{s+t-1}{t}(s-1)+\binom{s+t-1}{t-1}(t-1)\right)n+c_{s,t},
\end{equation*}
where $c_{s,t}$ is some constant depending on $s$ and $t$.
\end{theorem}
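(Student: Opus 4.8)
The plan is constructive: I would build an explicit family $\mathcal{F}=\mathcal{F}(n,s,t)\subseteq 2^{[n]}$ — the \emph{lantern} — of the claimed size and verify the two saturation conditions by hand. Think of $\mathcal{F}$ as organized by a ``position'' parameter $i$ ranging over (most of) $[n]$: at each position we place a constant-size cluster of $\binom{s+t-1}{t}(s-1)+\binom{s+t-1}{t-1}(t-1)$ sets, each a small modification of the prefix $[i]$ obtained by adjoining and/or deleting $O(s+t)$ of the coordinates near $i$; this is where the hypothesis $n\ge 2s+t-1$ enters, as there must be enough coordinates to the left and right of every position to perform all modifications and to fit the end ``caps'' of the lantern. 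The clusters come in two flavours. An \emph{upper cluster of type $T$}, indexed by a $t$-subset $T$ of an $(s+t-1)$-element window of coordinates around $i$, contributes $s-1$ pairwise incomparable sets (``upper ribs'') that all strictly contain a fixed $t$-element antichain encoded by $T$ — morally a copy of $\mathcal{K}_{s-1,t}$ missing its top vertex; summing over the $\binom{s+t-1}{t}$ choices of $T$ yields the first term of the coefficient. Dually, a \emph{lower cluster of type $T'$}, indexed by a $(t-1)$-subset $T'$, contributes $t-1$ pairwise incomparable sets (``lower ribs'') strictly contained in a fixed $s$-element antichain — a copy of $\mathcal{K}_{s,t-1}$ missing its bottom vertex — and the $\binom{s+t-1}{t-1}$ choices give the second term. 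The anchor antichains are shared between clusters and with the prefix backbone, and the additive $c_{s,t}$ collects these overlaps together with the finitely many cap sets. (For $s=t=2$ the per-position count is $3\cdot 1+3\cdot 1=6$, consistent with the $6n-10$ construction of Keszegh et al.)

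First I would prove that $\mathcal{F}$ is induced $\mathcal{K}_{s,t}$-free. An induced $\mathcal{K}_{s,t}$ here is an antichain $\mathcal{B}$ of size $t$ each of whose members is a proper subset of each member of an antichain $\mathcal{A}$ of size $s$. The key step is a locality lemma: any two sets based at positions $i<j$ with $j-i$ exceeding the window size are comparable (the first is essentially $[i]$, the second essentially $[j]$), so any antichain in $\mathcal{F}$ of size at least $2$ is confined to a bounded span of positions and hence lies inside a constant-size subfamily $\mathcal{F}_{\mathrm{loc}}$. It then remains to check, by a finite computation on $\mathcal{F}_{\mathrm{loc}}$, that no $t$-antichain has $s$ pairwise incomparable common proper supersets: within one cluster the ribs above a given $t$-antichain number only $s-1$ by design, and ribs drawn from different clusters that are both comparable to a common $t$-antichain turn out to be comparable to each other, so they cannot jointly form an $s$-antichain.

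Next, the saturation condition. Let $F\in 2^{[n]}\setminus\mathcal{F}$. As in the classical argument giving $\mathrm{sat}^{*}(n,\mathcal{K}_{2,1})=n+1$, I would first ``locate'' $F$: there is a position $i$ at which the prefixes of $F$ break, and near $i$ the set $F$ is incomparable only to a controlled collection of local sets while comparable to everything sufficiently far above or below. The window is designed precisely to meet a covering requirement: whatever the relationship of $F$ to the $s+t-1$ window coordinates, either (i) $F$ strictly contains the $t$-element anchor antichain of some upper cluster of type $T$ and is incomparable to all $s-1$ of its upper ribs, so $F$ supplies the missing top and $\mathcal{F}\cup\{F\}$ contains an induced $\mathcal{K}_{s,t}$, or (ii) dually $F$ completes some lower cluster as the missing bottom. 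Showing that the $\binom{s+t-1}{t}$ upper types and the $\binom{s+t-1}{t-1}$ lower types together catch \emph{every} $F$ is the main obstacle — it amounts to verifying that a certain covering design on an $(s+t-1)$-element ground set is complete, and it is exactly what dictates the combinatorial numerology in the statement. I expect this to need a careful prescription of which coordinates are added versus deleted in each rib, together with a short case analysis on how $\lvert F\cap W\rvert$ compares to $t$ and $t-1$ on the window $W$, and on whether $F$ ``breaks'' its prefix by omission or by inclusion of an extraneous coordinate.

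Finally the count is bookkeeping: there are $n-O(s+t)$ positions, each contributing a cluster of $\binom{s+t-1}{t}(s-1)+\binom{s+t-1}{t-1}(t-1)$ sets, plus $O_{s,t}(1)$ cap and overlap sets, which gives the stated bound with $c_{s,t}$ the aggregated constant. The only subtlety is to confirm that distinct clusters and caps genuinely contribute distinct subsets of $[n]$, so that nothing is over- or under-counted; this is immediate from the position bookkeeping, since a set based at position $i$ agrees with $[i]$ outside a bounded neighbourhood of the coordinate $i$.
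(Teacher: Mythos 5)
Your proposal correctly identifies the constructive strategy, the two-term coefficient, and even checks the $s=t=2$ specialisation against Keszegh et al., but as written it is a plan rather than a proof, and its central structural idea appears to be wrong for this problem. Concretely, you never pin down the construction: ``a small modification of the prefix $[i]$ obtained by adjoining and/or deleting $O(s+t)$ of the coordinates near $i$'' is not a definition one can check freeness or saturation against. More seriously, your freeness argument hinges on a \emph{locality lemma} --- that any antichain in $\mathcal{F}$ is confined to a bounded span of positions, reducing the check to a finite computation --- and this is false for the kind of family that actually works here. The paper's lanterns $\mathcal{L}^{s}(A)$ and $\mathcal{L}_{t}(A\cup\{s+t\})$ are indexed by a label $A\subseteq[s+t-1]$, not by a position, and each lantern is a bundle of chains stretching across the whole tail $[s+t+1,n]$. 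Two sets drawn from lanterns with incomparable labels $A\ne A'$ are incomparable \emph{regardless} of how far apart their sizes are (since $A\cup S\subseteq A'\cup S'$ forces $A\subseteq A'$). So large antichains span the entire cube, and locality simply does not hold. In place of locality, the paper proves freeness via a projection argument (Lemmas~\ref{lem1} and \ref{lem2}): any $s$ pairwise incomparable members of $\mathcal{F}'$ have $\lvert\bigcap F_i\cap[s+t-1]\rvert<t$, while any $t$ pairwise incomparable members have $\lvert\bigcup F_i\cap[s+t-1]\rvert\ge t$, and these two facts are incompatible with a nested $\mathcal{K}_{s,t}$.

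Your saturation step is explicitly flagged as ``the main obstacle'' and left open: you say you \emph{expect} a covering-design check and a careful prescription of which coordinates get added or deleted in each rib to work, but you do not supply either. This is where the real content lives. The paper handles it by a case analysis on $\lvert F\cap[s+t-1]\rvert$ (Lemmas~\ref{lem4}--\ref{lem6}), in each case matching $F$ explicitly with ribs of one specific lantern plus the fixed singletons and co-singletons. Two technical devices you would also need to discover are the ``first increment set'' and ``last increment set'' constraints of Definition~\ref{def1} (used to force incomparability in the boundary cases $\lvert F\rvert\le t$ and $\lvert F\rvert\ge n-s$), and the auxiliary piece $\mathcal{F}_5$ (a maximal $\mathcal{K}_{s,t}$-free extension within a constant pool $\mathcal{G}_1\cup\mathcal{G}_2$, which absorbs small and cosmall sets). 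Your ``caps'' gesture at the latter but without specification. In short: the numerology and the high-level shape are right, but the load-bearing lemmas are either missing or rest on a locality principle that does not hold for the construction that actually works.
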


Ferrara et al.~\cite{ferrara2017saturation} also proved that the induced saturation number is at least $\log_{2}{n}$ for a large class of posets, including the complete bipartite posets (except $\mathcal{K}_{1,1}$). Then, the dichotomy from Freschi et al.~\cite{freschi2023induced} yields a general lower bound $\mathrm{sat}^{*}(n,\mathcal{K}_{s,t})\geq\min\left\{2\sqrt{n},n/2+1\right\}$. If Conjecture~\ref{cje1} was to be true, then one would further expect $\mathrm{sat}^{*}(n,\mathcal{K}_{s,t})\geq{n+1}$. Indeed, the cases of $\mathcal{K}_{1,2}$ and $\mathcal{K}_{2,2}$ have been confirmed by Ferrara et al.~\cite{ferrara2017saturation} and Ivan~\cite{ivan2020saturation}, respectively. Here, we show that this linear lower bound holds for all $\mathcal{K}_{s,2}$.
\begin{theorem}
\label{thm2}
For all $n,s\in\mathbb{N}$, $\mathrm{sat}^{*}\left(n,\mathcal{K}_{s,2}\right)\geq{n+1}$.
\end{theorem}

In fact, we shall prove a stronger version of Theorem~\ref{thm2}. We say that $\mathcal{P}$ is a \textit{poset with legs} if
\begin{itemize}
\item there exist two incomparable elements $a,b\in\mathcal{P}$, such that $a$ and $b$ are smaller than every element in $\mathcal{P}\backslash\{a,b\}$, and
\item there exists an element $c\in\mathcal{P}$ that is larger than $a$ and $b$ but smaller than all elements in $\mathcal{P}\backslash\{a,b,c\}$.
\end{itemize}
The elements $a$ and $b$ are called the \textit{legs} of $\mathcal{P}$, and the element $c$ is referred to as the \textit{hip}. The term ``poset with legs'' was first introduced by Freschi et al.~\cite{freschi2023induced}, where they proved that if $\mathcal{P}$ is a poset with legs, then $\mathcal{P}$ has an induced saturation number at least $n+1$. By generalizing an idea from~\cite{ivan2020saturation}, we show that as long as $\mathcal{P}$ has legs, regardless of whether it has a hip or not, $\mathrm{sat}^{*}(n,\mathcal{P})$ is always lower bounded by $n+1$. This extends the result of Freschi et al.~\cite{freschi2023induced} to a larger class of posets. For example, $\mathcal{K}_{s,2}$ has legs but it is not a poset with legs when $s\geq2$.
\begin{theorem}
\label{thm3}
Let $\mathcal{P}$ be a poset, which contains two incomparable elements $a,b\in\mathcal{P}$, such that $a$ and $b$ are smaller than every element in $\mathcal{P}\backslash\{a,b\}$. Then for $n\in\mathbb{N}$, we have $\mathrm{sat}^{*}(n,\mathcal{P})\geq{n+1}$.
\end{theorem}

Since $\mathcal{K}_{s,2}$ satisfies the above condition, Theorem~\ref{thm3} immediately implies Theorem~\ref{thm2}. Theorems~\ref{thm1} and~\ref{thm2} thereby determine the correct order of $\mathrm{sat}^{*}(n,\mathcal{K}_{s,2})$. Moreover, we conjecture that the upper bound given in Theorem~\ref{thm1} is tight up to a multiplicative constant.
\begin{conjecture}
$\mathrm{sat}^{*}(n,\mathcal{K}_{s,t})=\Theta(n)$ for all fixed $s\geq{t}\geq2$.
\end{conjecture}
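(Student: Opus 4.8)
The upper-bound half, $\mathrm{sat}^{*}(n,\mathcal{K}_{s,t})=O(n)$, is exactly Theorem \ref{thm1}, so the content of the conjecture is the matching lower bound $\mathrm{sat}^{*}(n,\mathcal{K}_{s,t})=\Omega(n)$ for every fixed $s\geqslant t\geqslant2$. For $t=2$ this is Theorem \ref{thm2}, which follows from Theorem \ref{thm3} since $\mathcal{K}_{s,2}$ has legs. The cases $t\geqslant3$ are open; what follows is the route I would take, together with the point at which it currently stalls. The natural target is the clean bound $\mathrm{sat}^{*}(n,\mathcal{K}_{s,t})\geqslant n+1$, which is also consistent with Conjecture \ref{cje1}, and I would try to obtain it through the following generalization of Theorem \ref{thm3}: if a poset $\mathcal{P}$ has a set $M$ of minimal elements with $|M|\geqslant2$ such that every element of $\mathcal{P}\setminus M$ lies above every element of $M$, then $\mathrm{sat}^{*}(n,\mathcal{P})\geqslant n+1$. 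This specializes to Theorem \ref{thm3} when $|M|=2$, and $\mathcal{K}_{s,t}$ satisfies the hypothesis with $M$ the $t$-element bottom antichain and $\mathcal{P}\setminus M$ the $s$ tops.

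The first steps are routine reductions valid for all $s,t\geqslant2$. Neither $\emptyset$ nor $[n]$ can occur in any induced copy of $\mathcal{K}_{s,t}$: a top cannot equal $[n]$ (it must be incomparable to another top) and a bottom cannot equal $\emptyset$ (dually), while the two remaining combinations fail on cardinality grounds; hence a saturated family $\mathcal{F}$ must contain both $\emptyset$ and $[n]$. Next one checks that, when added to $\mathcal{F}$, a singleton $\{i\}$ can play only the role of a bottom and a co-singleton $[n]\setminus\{i\}$ only that of a top. Writing $U=\{i:\{i\}\in\mathcal{F}\}$ and $W=\{i:[n]\setminus\{i\}\in\mathcal{F}\}$ this gives $|\mathcal{F}|\geqslant|U|+|W|+2$, so we may assume $|U|+|W|\leqslant n-2$; since the target is symmetric under $\mathrm{sat}^{*}(n,\mathcal{K}_{s,t})=\mathrm{sat}^{*}(n,\mathcal{K}_{t,s})$ and complementation in $2^{[n]}$ swaps singletons with co-singletons, we may then assume at least $n/2$ indices $i$ satisfy $\{i\}\notin\mathcal{F}$. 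For each such $i$, saturation produces inside $\mathcal{F}$ an $s$-element antichain $A^{i}_{1},\dots,A^{i}_{s}$ with $i\in A^{i}_{k}\neq[n]$, together with a common $(t-1)$-element antichain of nonempty sets $B^{i}_{2},\dots,B^{i}_{t}$, each avoiding $i$ and contained in every $A^{i}_{k}$.

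The heart of the argument is then a charging step: one wants to show that no set of $\mathcal{F}$ can serve as a witness $A^{i}_{k}$ (or $B^{i}_{j}$) for more than a bounded number $C=C(s,t)$ of indices $i$, which yields $|\mathcal{F}|=\Omega(n)$. For $t=2$ — the legs case — this succeeds because the forbidden configuration one is trying to complete is essentially a single chain relation and the deleted leg $\{i\}$ pins it down; this is the mechanism behind Theorem \ref{thm3}. For $t\geqslant3$ the obstacle is that $\mathcal{F}$ is only barred from containing an induced $\mathcal{K}_{s,t}$, whereas the witness configuration \enquote{$A^{i}_{1},\dots,A^{i}_{s}$ over $B^{i}_{2},\dots,B^{i}_{t}$} is merely an induced $\mathcal{K}_{s,t-1}$, which is allowed, and the one bottom that would complete it to a $\mathcal{K}_{s,t}$ is precisely the set $\{i\}$ that has been removed. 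Forbidding reuse therefore forces one to argue through the bottoms: roughly, if many indices $i$ shared a witness top, one would amalgamate the antichains $\{B^{i}_{j}\}$ over several such $i$ into a genuine $t$-element antichain sitting below a common $s$-element antichain — an induced $\mathcal{K}_{s,t}$ inside $\mathcal{F}$, the desired contradiction. Controlling the comparabilities among the $B^{i}_{j}$'s for different $i$ so that this amalgamation really yields a fresh, incomparable bottom rather than collapsing into a chain is, I expect, the main obstacle; it appears to require a Ramsey- or extremal-type statement about antichains in $\mathcal{K}_{s,t}$-free families with no counterpart in the $t=2$ bookkeeping. This is why, beyond the general lower bound $\min\{2\sqrt{n},\,n/2+1\}$ of Freschi et al., even a lossy $\Omega(n)$ bound for $\mathcal{K}_{s,t}$ with $t\geqslant3$ remains open.
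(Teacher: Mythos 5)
This statement is a conjecture, and the paper offers no proof of it: only the upper bound $\mathrm{sat}^{*}(n,\mathcal{K}_{s,t})=O(n)$ (Theorem \ref{thm1}) and the lower bound for $t=2$ (Theorem \ref{thm2}, via the ``legs'' argument of Theorem \ref{thm3}) are established, while for fixed $t\geqslant3$ the best known lower bound is the $\min\{2\sqrt{n},n/2+1\}=\Omega(\sqrt{n})$ of Freschi et al. You have identified this state of affairs exactly, and you correctly decline to claim a proof. Your preliminary reductions are all sound: for $s,t\geqslant2$ neither $\emptyset$ nor $[n]$ can appear in an induced copy of $\mathcal{K}_{s,t}$, so both must lie in any saturated family; a singleton can only serve as a bottom and a co-singleton only as a top; and the complementation symmetry realizing $\mathrm{sat}^{*}(n,\mathcal{K}_{s,t})=\mathrm{sat}^{*}(n,\mathcal{K}_{t,s})$ is legitimate. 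You have also put your finger on the genuine obstruction to extending Theorem \ref{thm3} beyond two legs: the witness configuration forced by deleting $\{i\}$ is only an induced $\mathcal{K}_{s,t-1}$ sitting inside $\mathcal{F}$, which $\mathcal{K}_{s,t}$-freeness does not forbid, so the injection/charging mechanism that drives Claims \ref{cla1} and \ref{cla2} has no direct analogue. Nothing here needs correcting; the $t\geqslant3$ lower bound remains open, as the paper states.
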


For the remainder of this paper, Section~\ref{proofofthm1} is dedicated to proving Theorem~\ref{thm1}, and Theorem~\ref{thm3} is proved in Section~\ref{proofofthm3}.

\subsection*{Notations}
For $m,n\in\mathbb{Z}$, let $[m,n]$ denote the set of all integers $k$ with $m\leq{k}\leq{n}$, particularly, we write $[1,n]$ as $[n]$ for convenience. For $A\subseteq[n]$, denote $A^{c}$ the complement of $A$ with respect to $[n]$. Furthermore, we let $\binom{A}{k}$ denote the family of all $k$-element subsets of $A$, whereupon $2^{A}:=\bigcup_{k=0}^{\left\lvert{A}\right\rvert}\binom{A}{k}$ represents the family of all subsets of $A$.

\section{A linear upper bound for $\mathrm{sat}^{*}(n,\mathcal{K}_{s,t})$}
\label{proofofthm1}
Let $n\in\mathbb{N}$ and $A\subseteq{B}\subseteq[n]$. A poset $\mathcal{C}\subseteq2^{[n]}$ is called a \textit{chain} from $A$ to $B$, if the elements of $\mathcal{C}$ can be written as a sequence from $A$ to $B$, such that every element other than $B$ is a proper subset of the subsequent element. A chain $\mathcal{C}$ from $A$ to $B$ is \textit{complete} if $\lvert\mathcal{C}\rvert=\lvert{B\backslash{A}}\rvert+1$, namely, $\mathcal{C}$ contains sets of cardinality from $\lvert{A}\rvert$ to $\lvert{B}\rvert$. Furthermore, let $\mathcal{C}_{1}$ and $\mathcal{C}_{2}$ be two chains from $A$ to $B$. We say that $\mathcal{C}_{1}$ and $\mathcal{C}_{2}$ are \textit{internally disjoint} if $\mathcal{C}_{1}\cap\mathcal{C}_{2}=\{A,B\}$. The proposition below follows directly from the fact that Boolean lattices are symmetric chain orders. For completeness, we include a short proof.
\begin{proposition}
\label{pro1}
Let $n\in\mathbb{N}$ and $A\subseteq{B}\subseteq[n]$. Then there exist $\lvert{B\backslash{A}}\rvert$ pairwise internally disjoint complete chains from $A$ to $B$.
\end{proposition}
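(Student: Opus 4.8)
The plan is to reduce to the case $A=\emptyset$ and then exhibit the chains explicitly by a cyclic-shift construction. Write $k=\lvert B\backslash A\rvert$ and relabel the ground set so that $B\backslash A=[k]=\{1,2,\dots,k\}$; since every chain from $A$ to $B$ is obtained from a chain from $\emptyset$ to $[k]$ by adding the fixed set $A$ to each member, and this operation preserves proper containments, completeness, and internal disjointness, it suffices to produce $k$ pairwise internally disjoint complete chains from $\emptyset$ to $[k]$.

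For each $i\in[k]$, let $\pi_i$ be the cyclic permutation of $[k]$ given by $\pi_i(j)=i+j-1$, read modulo $k$ into the range $[k]$, so $\pi_i=(i,\,i+1,\,\dots,\,k,\,1,\,\dots,\,i-1)$. Define the complete chain
\begin{equation*}
\mathcal{C}_i=\bigl\{\,\emptyset,\ \{\pi_i(1)\},\ \{\pi_i(1),\pi_i(2)\},\ \dots,\ \{\pi_i(1),\dots,\pi_i(k-1)\},\ [k]\,\bigr\},
\end{equation*}
i.e.\ $\mathcal{C}_i$ consists of the $k+1$ nested sets obtained by adding the elements of $[k]$ one at a time in the cyclic order starting at $i$. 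Each $\mathcal{C}_i$ is clearly a complete chain from $\emptyset$ to $[k]$, and there are exactly $k$ of them.

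It remains to check internal disjointness: for $i\neq i'$ I must show $\mathcal{C}_i\cap\mathcal{C}_{i'}=\{\emptyset,[k]\}$. A nonempty proper member of $\mathcal{C}_i$ has the form $S=\{i,i+1,\dots,i+\ell-1\}$ (indices mod $k$) for some $\ell$ with $1\le\ell\le k-1$; that is, $S$ is a ``cyclic interval'' of length $\ell<k$, and crucially it determines its starting point $i$ uniquely — the start of a cyclic interval $S\neq\emptyset,[k]$ is the unique element $m\in S$ with $m-1\notin S$. Hence if $S\in\mathcal{C}_i\cap\mathcal{C}_{i'}$ is a nonempty proper set, its uniquely determined start forces $i=i'$, a contradiction. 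This establishes the proposition.

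I do not anticipate a genuine obstacle here; the only point that requires a little care is the uniqueness-of-starting-point argument, i.e.\ verifying that distinct cyclic shifts never produce the same nontrivial cyclic interval — this is exactly where the lengths being strictly between $0$ and $k$ is used, since $\emptyset$ and $[k]$ are the only sets that are cyclic intervals under more than one shift. (If one prefers a slicker finish, one can instead observe that the $\ell$-th set of $\mathcal{C}_i$ has $i$ as an element but $i+\ell\pmod k$ as a non-element for $1\le\ell\le k-1$, and read off $i$ from any common member as the element of $S$ whose cyclic predecessor lies outside $S$.)
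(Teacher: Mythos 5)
Your proof is correct and takes essentially the same approach as the paper: both use the cyclic-shift construction of complete chains, and both establish internal disjointness by observing that a nonempty proper cyclic interval has a unique starting point (the element whose cyclic predecessor lies outside the set). The paper works directly with labels $x_1,\dots,x_k$ on $B\backslash A$ rather than first reducing to $A=\emptyset$, $B=[k]$, but this is a purely cosmetic difference.
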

\begin{proof}
Without loss of generality assume $\lvert{B}\backslash{A}\rvert>0$, say $B\backslash{A}=\{x_{1},\dots,x_{k}\}$. For $i\in[k]$, we define the chain $\mathcal{C}_{i}\in2^{[n]}$ as $\left(A,A\cup\{x_{i}\},A\cup\{x_{i},x_{i+1}\},\dots,B\right)$, where the indices are modulo $k$. We can see that $\mathcal{C}_{i}$ is a complete chain from $A$ to $B$ for all $i\in[k]$. Moreover, we fix any $i,j\in[k]$ with $i<j$. Suppose $\mathcal{C}_{i}$ and $\mathcal{C}_{j}$ intersects at some internal element, namely, $\{x_{i},x_{i+1},\dots,x_{i+d}\}=\{x_{j},x_{j+1},\dots,x_{j+d}\}$ for some $d\in[0,k-2]$. Due to $d<k-1$, we have $x_{i-1}\notin\{x_{i},x_{i+1},\dots,x_{i+d}\}$. However, since $i<j$ and $x_{i}\in\{x_{j},x_{j+1},\dots,x_{j+d}\}$, one can deduce that $x_{i-1}\in\{x_{j},x_{j+1},\dots,x_{j+d}\}$, a contradiction.
\end{proof}

\subsection{Construction of an induced $\mathcal{K}_{s,t}$-free family}
\label{construction}
Let $n,s,t\in\mathbb{N}$ with $s\geq{t}\geq2$ and $n\geq{2s+t-1}$. This subsection is devoted to constructing a linear-sized family $\mathcal{F}\subseteq2^{[n]}$ that is induced $\mathcal{K}_{s,t}$-free.

Given $A\subseteq{B}\in2^{[n]}$ with $\lvert{B\backslash{A}}\rvert\geq{k}$, Proposition~\ref{pro1} guarantees that there exists a union of $k$ pairwise internally disjoint complete chains from $A$ to $B$, denoted by $\mathcal{L}$. Let $X_{1},\dots,X_{k}\in\mathcal{L}$ be distinct sets of size $\lvert{A}\rvert+1$. The \textit{first increment set} of $\mathcal{L}$ is defined as $\bigcup_{i=1}^{k}X_{i}\backslash{A}$. Since the chains in $\mathcal{L}$ are pairwise internally disjoint, we know the first increment set of $\mathcal{L}$ has size exactly $k$. Let $Y_{1},\dots,Y_{k}\in\mathcal{L}$ be distinct sets of size $\lvert{B}\rvert-1$. The \textit{last increment set} of $\mathcal{L}$ is defined as $B\backslash\bigcap_{i=1}^{k}Y_{i}$, which also has size $k$.

\begin{definition}
\label{def1}
Let $n,s,t\in\mathbb{N}$ with $s\geq{t}\geq2$ and $n\geq{2s+t-1}$. For $A\subseteq[s+t]$, an \textbf{upper $\boldsymbol{s}$-lantern} $\mathcal{L}^{s}(A)$ is a union of $s-1$ pairwise internally disjoint complete chains from $A$ to $A\cup[s+t+1,n]$, whose last increment set is $[s+t+1,2s+t-1]$. Similarly, a \textbf{lower $\boldsymbol{t}$-lantern} $\mathcal{L}_{t}(A)$ is a union of $t-1$ pairwise internally disjoint complete chains from $A$ to $A\cup[s+t+1,n]$, whose first increment set is $[s+t+1,s+2t-1]$.
\end{definition}
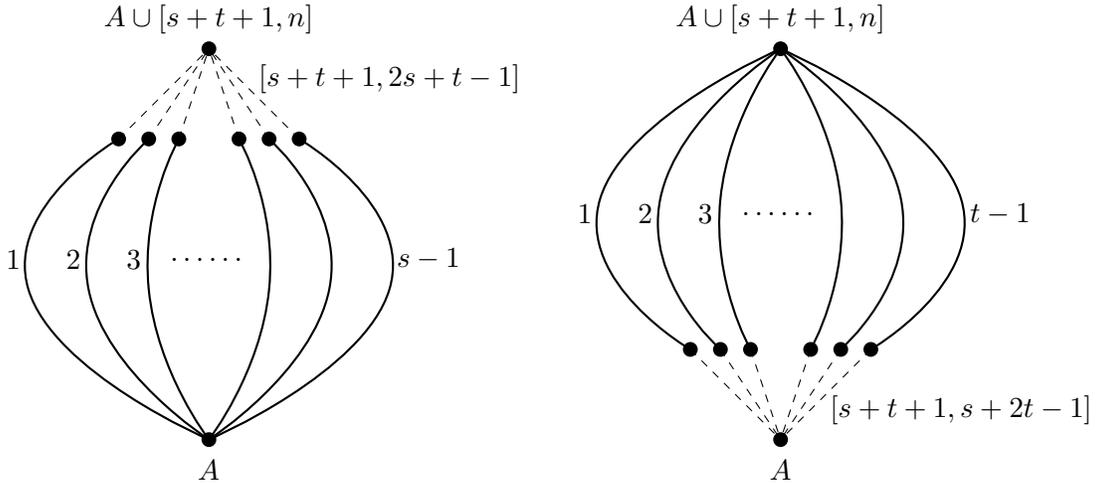
\begin{figure}[H]
\centering
\begin{tikzpicture}[scale=2/5]
\node at (0,-9){$A$};
\node at (0,6){$A\cup[s+t+1,n]$};
\node at (-6.5,-2){$1$};
\node at (-4.5,-2){$2$};
\node at (-2.5,-2){$3$};
\node at (0,-2){$\dots\dots$};
\node at (7.3,-2){$s-1$};
\node at (6,4){$[s+t+1,2s+t-1]$};
\fill (0,5) circle (7pt);
\fill (0,-8) circle (7pt);
\fill (-3,2) circle (7pt);
\fill (-2,2) circle (7pt);
\fill (-1,2) circle (7pt);
\fill (1,2) circle (7pt);
\fill (2,2) circle (7pt);
\fill (3,2) circle (7pt);
\draw[thick] plot [smooth, tension=1] coordinates {(0,-8) (-6,-3) (-3,2)};
\draw[thick] plot [smooth, tension=1] coordinates {(0,-8) (-4,-3) (-2,2)};
\draw[thick] plot [smooth, tension=1] coordinates {(0,-8) (-2,-3) (-1,2)};
\draw[thick] plot [smooth, tension=1] coordinates {(0,-8) (2,-3) (1,2)};
\draw[thick] plot [smooth, tension=1] coordinates {(0,-8) (4,-3) (2,2)};
\draw[thick] plot [smooth, tension=1] coordinates {(0,-8) (6,-3) (3,2)};
\draw[dashed] (-3,2)--(0,5);
\draw[dashed] (-2,2)--(0,5);
\draw[dashed] (-1,2)--(0,5);
\draw[dashed] (1,2)--(0,5);
\draw[dashed] (2,2)--(0,5);
\draw[dashed] (3,2)--(0,5);

\node at (19,-9){$A$};
\node at (19,6){$A\cup[s+t+1,n]$};
\node at (12.5,-0.5){$1$};
\node at (14.5,-0.5){$2$};
\node at (16.5,-0.5){$3$};
\node at (19,-0.5){$\dots\dots$};
\node at (26.3,-0.5){$t-1$};
\node at (25,-7){$[s+t+1,s+2t-1]$};
\fill (19,5) circle (7pt);
\fill (19,-8) circle (7pt);
\fill (16,-5) circle (7pt);
\fill (17,-5) circle (7pt);
\fill (18,-5) circle (7pt);
\fill (20,-5) circle (7pt);
\fill (21,-5) circle (7pt);
\fill (22,-5) circle (7pt);
\draw[thick] plot [smooth, tension=1] coordinates {(16,-5) (13,0) (19,5)};
\draw[thick] plot [smooth, tension=1] coordinates {(17,-5) (15,0) (19,5)};
\draw[thick] plot [smooth, tension=1] coordinates {(18,-5) (17,0) (19,5)};
\draw[thick] plot [smooth, tension=1] coordinates {(20,-5) (21,0) (19,5)};
\draw[thick] plot [smooth, tension=1] coordinates {(21,-5) (23,0) (19,5)};
\draw[thick] plot [smooth, tension=1] coordinates {(22,-5) (25,0) (19,5)};
\draw[dashed] (16,-5)--(19,-8);
\draw[dashed] (17,-5)--(19,-8);
\draw[dashed] (18,-5)--(19,-8);
\draw[dashed] (20,-5)--(19,-8);
\draw[dashed] (21,-5)--(19,-8);
\draw[dashed] (22,-5)--(19,-8);
\end{tikzpicture}
\caption{$\mathcal{L}^{s}(A)$ and $\mathcal{L}_{t}(A)$.}
\label{fig2}
\end{figure}

\begin{remark}
\label{rem1}
Observe that $F\cap[s+t-1]=A\cap[s+t-1]$ holds for any set $F$ in $\mathcal{L}^{s}(A)$ or $\mathcal{L}_{t}(A)$.
\end{remark}

The restriction on the first or last increment set in Definition~\ref{def1} seems peculiar at this point. We only need it to help us deal with some critical cases in the proof later (see Lemmas~\ref{lem5} and~\ref{lem6}). Other than that, we can safely ignore the restriction and consider the lantern as a simple union of pairwise internally disjoint chains.

Our construction of an induced $\mathcal{K}_{s,t}$-free family consists of the following parts:
\begin{itemize}
\item $\mathcal{F}_{1}:=\left\{[n]\right\}\cup\left\{\{x\}^{c}\text{ for all }x\in[s+t-1]\right\}$,
\item $\mathcal{F}_{2}:=\bigcup_{A\in\binom{[s+t-1]}{t}}\mathcal{L}^{s}(A)$,
\item $\mathcal{F}_{3}:=\bigcup_{A\in\binom{[s+t-1]}{t-1}}\mathcal{L}_{t}\left(A\cup\{s+t\}\right)$,
\item $\mathcal{F}_{4}:=\left\{\emptyset\right\}\cup\left\{\{x\}\text{ for all }x\in[s+t-1]\right\}$.
\end{itemize}
In particular, $\sum_{i=1}^{4}\lvert\mathcal{F}_{i}\rvert\leq\left(\binom{s+t-1}{t}(s-1)+\binom{s+t-1}{t-1}(t-1)\right)n$ follows by a simple calculation. Let $\mathcal{F}':=\bigcup_{i=1}^{4}\mathcal{F}_{i}$. We shall first show that $\mathcal{F}'$ does not contain an induced copy of $\mathcal{K}_{s,t}$.

\begin{lemma}
\label{lem1}
Let $F_{1},\dots,F_{s}\in\mathcal{F}'$ be pairwise incomparable. Then
\begin{equation*}
\left\lvert\bigcap_{i=1}^{s}\left(F_{i}\cap[s+t-1]\right)\right\rvert<t.
\end{equation*}
\end{lemma}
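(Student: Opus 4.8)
The plan is to argue by contradiction: suppose $F_1,\dots,F_s\in\mathcal{F}'$ are pairwise incomparable and $\lvert\bigcap_{i=1}^s(F_i\cap[s+t-1])\rvert\geqslant t$. First I would observe that no two of the $F_i$ can come from the trivial layers $\mathcal{F}_1$ or $\mathcal{F}_4$ in a way that is compatible with pairwise incomparability together with the size assumption; more precisely, $[n]$ and $\emptyset$ are comparable with everything, so they are immediately excluded, and a set of the form $\{x\}$ has $\lvert\{x\}\cap[s+t-1]\rvert\leqslant 1<t$, while a set of the form $\{x\}^c$ can intersect $[s+t-1]$ in at most $s+t-2$ points but, crucially, any two distinct co-singletons $\{x\}^c,\{y\}^c$ are incomparable yet their common part omits both $x$ and $y$. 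The key structural fact to extract is Remark \ref{rem1}: every $F\in\mathcal{F}_2\cup\mathcal{F}_3$ satisfies $F\cap[s+t-1]=A\cap[s+t-1]$ for the base set $A$ of the lantern it lies in, where $A\in\binom{[s+t-1]}{t}$ for $\mathcal{F}_2$ (so $A\subseteq[s+t-1]$ and $A\cap[s+t-1]=A$ has size exactly $t$) and $A\cup\{s+t\}$ with $A\in\binom{[s+t-1]}{t-1}$ for $\mathcal{F}_3$ (so the intersection with $[s+t-1]$ has size exactly $t-1$).

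The heart of the argument is then a counting/pigeonhole step on the "fingerprints" $F_i\cap[s+t-1]$. If the intersection of all $s$ fingerprints has size at least $t$, then since each fingerprint coming from $\mathcal{F}_3$ has size only $t-1$, no $F_i$ can lie in $\mathcal{F}_3$; each fingerprint coming from $\mathcal{F}_4$ has size at most $1<t$ (as $t\geqslant 2$), so no $F_i$ lies in $\mathcal{F}_4$; $[n]\in\mathcal{F}_1$ is comparable to every set, so it is excluded, which leaves $\mathcal{F}_2$ together with the co-singletons from $\mathcal{F}_1$. I would show that at most one $F_i$ can be a co-singleton: the intersection of the fingerprints forces a common $t$-subset $T\subseteq[s+t-1]$ with $T\subseteq F_i$ for all $i$; a co-singleton $\{x\}^c$ contributing to this must have $x\notin T$, so it equals $[n]\setminus\{x\}$ for some $x\in[s+t-1]\setminus T$, and two such distinct co-singletons are incomparable but both contain $T$ and are both contained in $[n]$, which is fine — the real obstruction is that a co-singleton $[n]\setminus\{x\}$ is comparable (as a superset) to essentially everything in the lanterns unless... here I would use that every lantern set is a subset of $A\cup[s+t+1,n]$, hence misses the coordinates in $[s+t-1]\setminus A$, and compare. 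Thus the genuinely hard case reduces to all $s$ sets lying in $\mathcal{F}_2$.

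So the main obstacle, and the case I would spend the most care on, is: $F_1,\dots,F_s\in\mathcal{F}_2$ pairwise incomparable with $\bigcap_i(F_i\cap[s+t-1])=\bigcap_i A_i$ of size $\geqslant t$, where $A_i\in\binom{[s+t-1]}{t}$ is the base of the lantern containing $F_i$. Since each $A_i$ has size exactly $t$, the intersection having size $\geqslant t$ forces $A_1=\dots=A_s=:A$. Hence all $s$ sets $F_i$ lie in the single upper $s$-lantern $\mathcal{L}^s(A)$, which by Definition \ref{def1} is a union of only $s-1$ pairwise internally disjoint complete chains from $A$ to $A\cup[s+t+1,n]$. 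Now I invoke pigeonhole: $s$ pairwise incomparable sets cannot all lie in a union of $s-1$ chains, because two sets on the same chain are comparable, and the internal disjointness means the only sets shared between chains are the endpoints $A$ and $A\cup[s+t+1,n]$, which are comparable to everything in the lantern — so any antichain inside $\mathcal{L}^s(A)$ has size at most $s-1$. This contradiction completes the proof. I would finally double-check the mixed subcase (one co-singleton plus $s-1$ lantern sets) by the same chain-pigeonhole count, noting that adding a co-singleton $[n]\setminus\{x\}$ can only be incomparable with a lantern set $F$ if $F\not\subseteq[n]\setminus\{x\}$, i.e. $x\in F$, but $x\in[s+t-1]\setminus A$ forces $x\notin F$ by Remark \ref{rem1}, so in fact the co-singleton is comparable with every set of $\mathcal{L}^s(A)$ and cannot appear at all — confirming that the all-in-$\mathcal{F}_2$ analysis is exhaustive.
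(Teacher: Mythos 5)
Your approach is essentially the same as the paper's: rule out $\mathcal{F}_3$ and $\mathcal{F}_4$ by fingerprint size, reduce to $\mathcal{F}_1\cup\mathcal{F}_2$, force all lantern sets to share one base $A$ because the bases are size-$t$ subsets of $[s+t-1]$, and then pigeonhole an antichain of size $s$ into the $s-1$ internally disjoint chains of $\mathcal{L}^s(A)$. Your handling of the mixed co-singleton/lantern case (intersection $\geqslant t$ forces $x\notin A$, while incomparability forces $x\in F$, contradicting Remark \ref{rem1}) is the contrapositive of the paper's version and equally valid.

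There is one loose end. You conclude that ``the all-in-$\mathcal{F}_2$ analysis is exhaustive,'' but the argument you give for excluding co-singletons requires the presence of at least one lantern set (to identify $T$ with a base $A$ and place $x$ in $[s+t-1]\setminus A$). The case where \emph{all} $s$ sets are co-singletons $\{x_1\}^c,\dots,\{x_s\}^c$ is not explicitly closed. You actually have the needed observation sitting in your ``at most one co-singleton'' paragraph: incomparability forces the $x_i$ to be distinct elements of $[s+t-1]$, so
\[
\left\lvert\bigcap_{i=1}^s\bigl(F_i\cap[s+t-1]\bigr)\right\rvert=\bigl\lvert[s+t-1]\setminus\{x_1,\dots,x_s\}\bigr\rvert\leqslant t-1<t,
\]
but you never draw this conclusion. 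The paper handles this as its own Case 1. Once you add that line, the proof is complete and matches the paper's.
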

\begin{proof}
Suppose there exist $F_{1},\dots,F_{s}\in\mathcal{F}'$ that are pairwise incomparable and $\bigcap_{i=1}^{s}\left(F_{i}\cap[s+t-1]\right)$ has size at least $t$. Then we must have $\left\{F_{1},\dots,F_{s}\right\}\cap\mathcal{F}_{4}=\emptyset$, since for any set in $\mathcal{F}_{4}$, its intersection with $[s+t-1]$ has size smaller than $t$. Also, $\left\{F_{1},\dots,F_{s}\right\}\cap\mathcal{F}_{3}=\emptyset$, because every set in $\mathcal{F}_{3}$ belongs to a lantern $\mathcal{L}_{t}\left(A\cup\{s+t\}\right)$ for some $A\in\binom{[s+t-1]}{t-1}$ and Remark~\ref{rem1} indicates that its intersection with $[s+t-1]$ is exactly $A$ and thus has size smaller than $t$.\\
We claim that $\left\{F_{1},\dots,F_{s}\right\}\cap\mathcal{F}_{1}\neq\emptyset$ and $\left\{F_{1},\dots,F_{s}\right\}\cap\mathcal{F}_{2}\neq\emptyset$ can not be simultaneously true. Indeed, assume without loss of generality that $F_{1}\in\mathcal{F}_{1}$ and $F_{2}\in\mathcal{F}_{2}$. Since $F_{2}$ belongs to a lantern $\mathcal{L}^{s}(A)$ for some $A\in\binom{[s+t-1]}{t}$, we have $F_{2}\cap[s+t-1]=A$. On the other hand, because $F_{1}$ is incomparable with $F_{2}$, the only possibility is $F_{1}=\{x\}^{c}$ with some $x\in{A}$. But then it holds that $F_{1}\cap{F_{2}}\cap[s+t-1]=A\backslash\{x\}$, which is a contradiction since $\lvert{A\backslash\{x\}}\rvert<t$.\\
Hence, we are left with the following two cases.\\
\textbf{Case 1:} $\left\{F_{1},\dots,F_{s}\right\}\subseteq\mathcal{F}_{1}$.\\
Since $F_{1},\dots,F_{s}$ are pairwise incomparable, we must have $F_{i}=\{x_{i}\}^{c}$ with $x_{i}\in[s+t-1]$ for all $i\in[s]$. Then it follows that $\big\lvert\bigcap_{i=1}^{s}\left(F_{i}\cap[s+t-1]\right)\big\rvert=\big\lvert[s+t-1]\backslash\{x_{1},\dots,x_{s}\}\big\rvert<t$, a contradiction.\\
\textbf{Case 2:} $\left\{F_{1},\dots,F_{s}\right\}\subseteq\mathcal{F}_{2}$.\\
Observe that $F_{1},\dots,F_{s}$ must belong to the same lantern $\mathcal{L}^{s}(A)$. Otherwise, assume without loss of generality that $F_{1}\in\mathcal{L}^{s}(A_{1})$ and $F_{2}\in\mathcal{L}^{s}(A_{2})$, where $A_{1},A_{2}\in\binom{[s+t-1]}{t}$ and $A_{1}\neq{A_{2}}$. Then we would have $\big\lvert{F_{1}\cap{F_{2}}\cap[s+t-1]}\big\rvert=\lvert{A_{1}\cap{A_{2}}}\rvert<t$, a contradiction. Now, because $F_{1},\dots,F_{s}$ belong to the same lantern $\mathcal{L}^{s}(A)$, by the pigeonhole principle, at least two of them lie in the same chain, meaning that $F_{1},\dots,F_{s}$ are not pairwise incomparable, a contradiction.
\end{proof}

\begin{lemma}
\label{lem2}
Let $F_{1},\dots,F_{t}\in\mathcal{F}'$ be pairwise incomparable. Then
\begin{equation*}
\left\lvert\bigcup_{i=1}^{t}\left(F_{i}\cap[s+t-1]\right)\right\rvert\geq{t}.
\end{equation*}
\end{lemma}
\begin{proof}
Suppose there exist $F_{1},\dots,F_{t}\in\mathcal{F}'$ that are pairwise incomparable and $\bigcup_{i=1}^{t}\left(F_{i}\cap[s+t-1]\right)$ has size smaller than $t$. Then we immediately have $\left\{F_{1},\dots,F_{t}\right\}\cap\left(\mathcal{F}_{1}\cup\mathcal{F}_{2}\right)=\emptyset$, as every set in $\left(\mathcal{F}_{1}\cup\mathcal{F}_{2}\right)$ contains at least $t$ elements from $[s+t-1]$.\\
We claim next that $\left\{F_{1},\dots,F_{t}\right\}\cap\mathcal{F}_{3}\neq\emptyset$ and $\left\{F_{1},\dots,F_{t}\right\}\cap\mathcal{F}_{4}\neq\emptyset$ can not hold simultaneously. Otherwise, assume without loss of generality that $F_{3}\in\mathcal{F}_{3}$ and $F_{4}\in\mathcal{F}_{4}$. Since $F_{3}$ belongs to a lantern $\mathcal{L}_{t}\left(A\cup\{s+t\}\right)$ for some $A\in\binom{[s+t-1]}{t-1}$, $F_{3}\cap[s+t-1]=A$. Then, to assure that $F_{4}\in\mathcal{F}_{4}$ is incomparable with $F_{3}$, one must have $F_{4}=\{x\}$ with some $x\in[s+t-1]\backslash{A}$. This yields that $\left(F_{3}\cup{F_{4}}\right)\cap[s+t-1]=A\cup\{x\}$, a contradiction.\\
We are again left with two cases.\\
\textbf{Case 1:} $\left\{F_{1},\dots,F_{t}\right\}\subseteq\mathcal{F}_{3}$.\\
If $F_{1}\in\mathcal{L}_{t}\left(A_{1}\cup\{s+t\}\right)$ and $F_{2}\in\mathcal{L}_{t}\left(A_{2}\cup\{s+t\}\right)$ with $A_{1}\neq{A_{2}}\in\binom{[s+t-1]}{t-1}$, then we would have $\big\lvert\left(F_{1}\cup{F_{2}}\right)\cap[s+t-1]\big\rvert=\lvert{A_{1}\cup{A_{2}}}\rvert\geq{t}$, a contradiction. Accordingly, $F_{1},\dots,F_{s}$ must belong to the same lantern $\mathcal{L}_{t}\left(A\cup\{s+t\}\right)$, by the pigeonhole principle, at least two of them lie in the same chain. Hence, $F_{1},\dots,F_{t}$ are not pairwise incomparable, a contradiction.\\
\textbf{Case 2:} $\left\{F_{1},\dots,F_{t}\right\}\subseteq\mathcal{F}_{4}$.\\
Recall that $F_{1},\dots,F_{t}$ are pairwise incomparable, the only possibility is $F_{i}=\{x_{i}\}$ with $x_{i}\in[s+t-1]$ for all $i\in[t]$. Then we have $\big\lvert\bigcup_{i=1}^{t}\left(F_{i}\cap[s+t-1]\right)\big\rvert=\big\lvert\{x_{1},\dots,x_{t}\}\big\rvert=t$, leading to a contradiction.
\end{proof}

\begin{lemma}
\label{lem3}
The family $\mathcal{F}'$ is induced $\mathcal{K}_{s,t}$-free.
\end{lemma}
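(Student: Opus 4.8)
The plan is to obtain a contradiction from the existence of an induced copy of $\mathcal{K}_{s,t}$ in $\mathcal{F}'$ by playing Lemma \ref{lem1} against Lemma \ref{lem2}. Suppose $\mathcal{F}'$ contains such a copy. Unpacking the definition of an induced copy, there are sets $U_{1},\dots,U_{s}\in\mathcal{F}'$ realising the $s$ vertices of the upper layer of $\mathcal{K}_{s,t}$ and sets $L_{1},\dots,L_{t}\in\mathcal{F}'$ realising the $t$ vertices of the lower layer, such that $U_{1},\dots,U_{s}$ are pairwise incomparable, $L_{1},\dots,L_{t}$ are pairwise incomparable, and $L_{j}\subseteq U_{i}$ for every $i\in[s]$ and every $j\in[t]$ (distinctness of all these sets comes for free from injectivity of the embedding).

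The one substantive step is the observation that, since $L_{j}\subseteq U_{i}$ for all $i$, each $L_{j}$ lies in $\bigcap_{i=1}^{s}U_{i}$, and therefore
\begin{equation*}
\bigcup_{j=1}^{t}\bigl(L_{j}\cap[s+t-1]\bigr)\subseteq\bigcap_{i=1}^{s}\bigl(U_{i}\cap[s+t-1]\bigr).
\end{equation*}
Now Lemma \ref{lem2} applied to $L_{1},\dots,L_{t}$ says the left-hand side has size at least $t$, while Lemma \ref{lem1} applied to $U_{1},\dots,U_{s}$ says the right-hand side has size strictly less than $t$; the nesting above then yields $t\leqslant\lvert\bigcup_{j=1}^{t}(L_{j}\cap[s+t-1])\rvert\leqslant\lvert\bigcap_{i=1}^{s}(U_{i}\cap[s+t-1])\rvert<t$, a contradiction. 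Hence $\mathcal{F}'$ is induced $\mathcal{K}_{s,t}$-free.

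I do not expect a real obstacle here: Lemmas \ref{lem1} and \ref{lem2} were tailored so that the "upper" sets of any candidate $\mathcal{K}_{s,t}$ have a small common trace on $[s+t-1]$ whereas the "lower" sets have a large union of traces, and the containments forced by the bipartite order make these two quantities comparable. The only place to be slightly careful is the translation from "induced copy of $\mathcal{K}_{s,t}$" to the concrete conditions on $U_{1},\dots,U_{s},L_{1},\dots,L_{t}$, in particular that it forces $L_{j}\subseteq U_{i}$ for all pairs $(i,j)$ rather than merely along Hasse edges; once that is in place the rest is immediate.
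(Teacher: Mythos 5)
Your argument is correct and coincides with the paper's proof: both observe that the complete bipartite order forces $\bigcup_{j}L_{j}\subseteq\bigcap_{i}U_{i}$, hence $\bigcup_{j}(L_{j}\cap[s+t-1])\subseteq\bigcap_{i}(U_{i}\cap[s+t-1])$, and then pit Lemma \ref{lem2} (size at least $t$) against Lemma \ref{lem1} (size less than $t$) for the contradiction. The side remark you flag is indeed fine, since in $\mathcal{K}_{s,t}$ every lower vertex lies below every upper vertex, so the embedding forces $L_{j}\subseteq U_{i}$ for all pairs.
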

\begin{proof}
Suppose there exists an induced copy of $\mathcal{K}_{s,t}$ in $\mathcal{F}'$, whose upper layer consists of $F_{1},\dots,F_{s}$ and whose lower layer consists of $F_{1}',\dots,F_{t}'$, see Figure~\ref{fig3}.
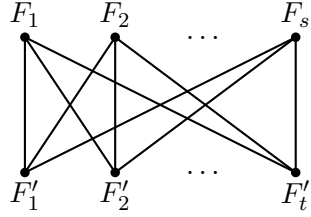
\begin{figure}[H]
\centering
\begin{tikzpicture}[scale=3/5]
\node at (-3,0.5){$F_{1}$};
\fill (-3,0) circle (3pt);
\node at (-1,0.5){$F_{2}$};
\fill (-1,0) circle (3pt);
\node at (1,0){$\dots$};
\node at (3,0.5){$F_{s}$};
\fill (3,0) circle (3pt);
\node at (-3,-3.5){$F_{1}'$};
\fill (-3,-3) circle (3pt);
\node at (-1,-3.5){$F_{2}'$};
\fill (-1,-3) circle (3pt);
\node at (1,-3){$\dots$};
\node at (3,-3.5){$F_{t}'$};
\fill (3,-3) circle (3pt);
\draw [thick] (-3,0) -- (-3,-3);
\draw [thick] (-3,0) -- (-1,-3);
\draw [thick] (-3,0) -- (3,-3);
\draw [thick] (-1,0) -- (-3,-3);
\draw [thick] (-1,0) -- (-1,-3);
\draw [thick] (-1,0) -- (3,-3);
\draw [thick] (3,0) -- (-3,-3);
\draw [thick] (3,0) -- (-1,-3);
\draw [thick] (3,0) -- (3,-3);
\end{tikzpicture}
\caption{The hypothetical induced copy of $\mathcal{K}_{s,t}$.}
\label{fig3}
\end{figure}
\noindent
Then it holds that $\bigcup_{i=1}^{t}F_{i}'\subseteq\bigcap_{i=1}^{s}F_{i}$. But by Lemmas~\ref{lem1} and~\ref{lem2} we have $\left\lvert\bigcap_{i=1}^{s}\left(F_{i}\cap[s+t-1]\right)\right\rvert<t$ and $\left\lvert\bigcup_{i=1}^{t}\left(F_{i}'\cap[s+t-1]\right)\right\rvert\geq{t}$, implying that $\bigcup_{i=1}^{t}F_{i}'\not\subseteq\bigcap_{i=1}^{s}F_{i}$, a contradiction.
\end{proof}

Now that $\bigcup_{i=1}^{4}\mathcal{F}_{i}$ is induced $\mathcal{K}_{s,t}$-free, we construct the last part $\mathcal{F}_{5}$ by letting $\mathcal{F}_{5}$ be a maximal subset of $\mathcal{G}_{1}\cup\mathcal{G}_{2}$, where
\begin{equation*}
\mathcal{G}_{1}:=\bigcup_{i=2}^{s}\left\{A^{c}\text{ for all }A\in\binom{[2s+t-1]}{i}\right\}\quad\text{and}\quad\mathcal{G}_{2}:=\bigcup_{i=2}^{t}\binom{[s+2t-1]}{i},
\end{equation*}
such that there is no induced copy of $\mathcal{K}_{s,t}$ in $\bigcup_{i=1}^{5}\mathcal{F}_{i}$. Let $\mathcal{F}:=\bigcup_{i=1}^{5}\mathcal{F}_{i}$ be our final family. It is obvious that $\mathcal{F}$ is induced $\mathcal{K}_{s,t}$-free.

\subsection{Induced $\mathcal{K}_{s,t}$-saturated}
The purpose of this subsection is to show that the constructed family $\mathcal{F}\subseteq2^{[n]}$ is indeed induced $\mathcal{K}_{s,t}$-saturated. Namely, for every $F\in2^{[n]}\backslash\mathcal{F}$, we shall prove that there exists an induced copy of $\mathcal{K}_{s,t}$ in $\mathcal{F}\cup\{F\}$.
\begin{lemma}
\label{lem4}
Let $F\in2^{[n]}\backslash\mathcal{F}$ with $\big\lvert{F\cap[s+t-1]}\big\rvert\in\{0,s+t-1\}$. Then $\mathcal{F}\cup\{F\}$ contains an induced copy of $\mathcal{K}_{s,t}$.
\end{lemma}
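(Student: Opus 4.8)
The plan is to handle the two extreme cardinalities $\lvert F\cap[s+t-1]\rvert\in\{0,s+t-1\}$ separately, each time exhibiting an explicit induced copy of $\mathcal{K}_{s,t}$ in $\mathcal{F}\cup\{F\}$ using sets from $\mathcal{F}_1$ and $\mathcal{F}_4$ together with $F$. First suppose $\lvert F\cap[s+t-1]\rvert=0$. The natural candidate for the upper layer is $F$ together with $s-1$ many singleton-complements $\{x\}^c\in\mathcal{F}_1$ (with $x\in[s+t-1]$), and for the lower layer $t$ singletons $\{x\}\in\mathcal{F}_4$; one must check that $F$ is strictly above each chosen singleton (i.e.\ $\{x\}\subsetneq F$ for the $t$ indices used on the bottom), that $F$ is incomparable to each chosen $\{x\}^c$, that $F$ is not equal to any of them, and that $F$ is not itself one of the $2^{[n]}$-sets already forced out. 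If $F$ contains none of $[s+t-1]$ but $F\neq\emptyset$, we can pick $t$ elements of $F$ outside $[s+t-1]$ as the bottom singletons and $s-1$ elements of $[s+t-1]$ as the top complements, using $n\geqslant 2s+t-1$ to guarantee enough room; the case $F=\emptyset$ is impossible since $\emptyset\in\mathcal{F}_4\subseteq\mathcal{F}$. Dually, if $\lvert F\cap[s+t-1]\rvert=s+t-1$, i.e.\ $[s+t-1]\subseteq F$, then $F\neq[n]$ (as $[n]\in\mathcal{F}_1$) forces $F^c\neq\emptyset$, and we build the copy with $F$ on the bottom alongside $t-1$ singletons $\{x\}$, wait — rather, the symmetric move: put $F$ in the lower layer together with $t-1$ sets of the form $\{y\}$ for suitable $y\notin F$, and put $s$ sets $\{x\}^c\in\mathcal{F}_1$ in the upper layer, checking $F\subsetneq\{x\}^c$ fails in general, so instead one uses $[n]$ and $s-1$ complements on top with $F$ below.

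More carefully, in the second case I would take the upper layer to be $[n]$ together with $s-1$ complements $\{x_1\}^c,\dots,\{x_{s-1}\}^c$ and the lower layer to be $F$ together with $t-1$ singletons $\{y_1\},\dots,\{y_{t-1}\}$, choosing the $x_i$ and $y_j$ from $[s+t-1]$ cleverly so that: (i) the singletons $\{y_j\}$ are pairwise incomparable (automatic) and each lies below every upper element — below $[n]$ trivially and below $\{x_i\}^c$ iff $y_j\neq x_i$; (ii) $F\subseteq[n]$ and $F\subseteq\{x_i\}^c$ iff $x_i\notin F$, so the $x_i$ must be chosen outside $F$, which is possible precisely because $F^c\cap[s+t-1]$ — hmm, $F\supseteq[s+t-1]$ makes this empty, so one instead chooses $x_i\in F^c\subseteq[s+t,n]$; (iii) the complements are pairwise incomparable (automatic) and incomparable to $[n]$ (automatic since $F^c\neq\emptyset$); (iv) $F$ is incomparable to each $\{y_j\}$, i.e.\ $y_j\notin F$, forcing $y_j\in[s+t-1]\setminus F$, which again is empty — so in fact the bottom singletons should be chosen from $[s+t,n]\setminus F=F^c\setminus\{$something$\}$ disjointly from the $x_i$'s. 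This reshuffling shows the first extreme is the clean one and the second needs the bottom layer drawn from $F^c$; the counting works as long as $\lvert F^c\rvert$ is large enough, and if $F^c$ is too small then $F$ itself is close to $[n]$ and a direct ad hoc copy (or an appeal to the symmetric first case via complementation within $[s+t,n]$) finishes it.

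The key steps, in order: (1) dispose of the degenerate possibilities $F\in\{\emptyset,[n]\}$, which are ruled out by $\mathcal{F}_1,\mathcal{F}_4\subseteq\mathcal{F}$; (2) in the case $F\cap[s+t-1]=\emptyset$, produce the explicit $\mathcal{K}_{s,t}$ with top layer $\{F\}\cup\{\{x_i\}^c:i\in[s-1]\}$ and bottom layer $t$ singletons drawn from $F\setminus[s+t-1]$ (nonempty and large enough by $n\geqslant 2s+t-1$), then verify all comparabilities, incomparabilities, and distinctness; (3) in the case $[s+t-1]\subseteq F$, produce the dual $\mathcal{K}_{s,t}$ with bottom layer $\{F\}\cup\{\{y_j\}:j\in[t-1]\}$ for $y_j\in F^c$ and top layer $\{[n]\}\cup\{\{x_i\}^c:i\in[s-1]\}$ for $x_i\in F^c$ distinct from the $y_j$, again checking all relations; (4) handle the subcase where $\lvert F^c\rvert<s+t-2$ by a direct construction or by invoking $\mathcal{G}_1,\mathcal{G}_2$-type sets already in $\mathcal{F}_5$. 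The main obstacle I anticipate is step (3)/(4): unlike the first extreme, where $F$ sits comfortably low and the ambient structure $\mathcal{F}_1\cup\mathcal{F}_4$ supplies everything, the second extreme forces all the auxiliary sets to be pulled from the small set $F^c$, so one must either argue $\lvert F^c\rvert$ is automatically large enough using the hypothesis that $F\notin\mathcal{F}$ (and in particular $F\notin\mathcal{G}_1$-generated part of $\mathcal{F}_5$), or treat the residual small-complement cases by hand. Getting the bookkeeping of "choose $s-1+t-1$ distinct indices avoiding $F$" to go through cleanly is the crux; everything else is routine verification of the poset relations.
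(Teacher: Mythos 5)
Your proposal has a genuine gap, and it runs through both cases. In the case $F\cap[s+t-1]=\emptyset$, you place $F$ in the \emph{upper} layer and then need $t$ singletons $\{x\}$ with $x\in F\setminus[s+t-1]$ for the lower layer. But $\mathcal{F}_4=\{\emptyset\}\cup\{\{x\}:x\in[s+t-1]\}$ only contains singletons indexed inside $[s+t-1]$; for $x\notin[s+t-1]$ the singleton $\{x\}$ is generally not in $\mathcal{F}$ at all, so there is nothing to put below $F$. (It also may fail that $\lvert F\rvert\geqslant t$, e.g.\ $F=\{s+t\}$.) Symmetrically, in the case $[s+t-1]\subseteq F$ you place $F$ in the \emph{lower} layer and end up needing complements $\{x\}^c$ and singletons $\{y\}$ with $x,y\in F^c\subseteq[s+t,n]$; again $\mathcal{F}_1=\{[n]\}\cup\{\{x\}^c:x\in[s+t-1]\}$ and $\mathcal{F}_4$ simply do not contain these, so the auxiliary sets you reach for are not available. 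You correctly diagnosed that $F^c\cap[s+t-1]=\emptyset$ is the obstruction, but then tried to route around it by reaching outside $[s+t-1]$, which the construction does not support.

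The fix is to swap the layer $F$ sits in, which is what the paper does. When $F\cap[s+t-1]=\emptyset$ and $F\neq\emptyset$, put $F$ in the \emph{lower} layer together with $\{1\},\dots,\{t-1\}\in\mathcal{F}_4$: these are pairwise incomparable with $F$ because $F$ is disjoint from $[s+t-1]$ and nonempty, and all $t$ of them are properly contained in each of $\{t\}^c,\dots,\{s+t-1\}^c\in\mathcal{F}_1$, which form the upper layer. When $[s+t-1]\subseteq F$ and $F\neq[n]$, put $F$ in the \emph{upper} layer together with $\{1\}^c,\dots,\{s-1\}^c\in\mathcal{F}_1$: these are pairwise incomparable with $F$ because $[s-1]\subseteq F$ and $F\neq[n]$, and all $s$ of them properly contain each of $\{s\},\dots,\{s+t-1\}\in\mathcal{F}_4$, which form the lower layer. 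No appeal to $\mathcal{F}_5$, $\mathcal{G}_1$, $\mathcal{G}_2$, or complement-size bookkeeping is needed; both cases are handled uniformly using only $\mathcal{F}_1$ and $\mathcal{F}_4$.
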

\begin{proof}
We split the proof into two cases.\\
\textbf{Case 1:} $\big\lvert{F\cap[s+t-1]}\big\rvert=0$.\\
Since $F$ is nonempty and $F\cap[s+t-1]=\emptyset$, $F$ is incomparable with any singleton $\{x\}\subseteq[s+t-1]$. Hence we can take $\{1\},\dots,\{t-1\}\in\mathcal{F}_{4}$, so that $F,\{1\},\dots,\{t-1\}$ are pairwise incomparable, which shall form the lower layer of our desired $\mathcal{K}_{s,t}$. For the upper layer, we choose $\{t\}^{c},\dots,\{s+t-1\}^{c}\in\mathcal{F}_{1}$. In this way we obtain an induced copy of $\mathcal{K}_{s,t}$ in $\mathcal{F}\cup\{F\}$, see Figure~\ref{fig4}.
\begin{figure}[H]
\begin{minipage}{0.49\textwidth}
\centering
\begin{tikzpicture}[scale=3/5]
\node at (-3,0.5){$\{t\}^{c}$};
\fill (-3,0) circle (3pt);
\node at (-1,0.5){$\{t+1\}^{c}$};
\fill (-1,0) circle (3pt);
\node at (1,0){$\dots$};
\node at (3,0.5){$\{s+t-1\}^{c}$};
\fill (3,0) circle (3pt);
\node at (-3,-3.5){$F$};
\fill (-3,-3) circle (3pt);
\node at (-1,-3.5){$\{1\}$};
\fill (-1,-3) circle (3pt);
\node at (1,-3){$\dots$};
\node at (3,-3.5){$\{t-1\}$};
\fill (3,-3) circle (3pt);
\draw [thick] (-3,0) -- (-3,-3);
\draw [thick] (-3,0) -- (-1,-3);
\draw [thick] (-3,0) -- (3,-3);
\draw [thick] (-1,0) -- (-3,-3);
\draw [thick] (-1,0) -- (-1,-3);
\draw [thick] (-1,0) -- (3,-3);
\draw [thick] (3,0) -- (-3,-3);
\draw [thick] (3,0) -- (-1,-3);
\draw [thick] (3,0) -- (3,-3);
\end{tikzpicture}
\caption{The induced copy of $\mathcal{K}_{s,t}$ in Case 1.}
\label{fig4}
\end{minipage}
\hfill
\begin{minipage}{0.49\textwidth}
\centering
\begin{tikzpicture}[scale=3/5]
\node at (9,0.5){$F$};
\fill (9,0) circle (3pt);
\node at (11,0.5){$\{1\}^{c}$};
\fill (11,0) circle (3pt);
\node at (13,0){$\dots$};
\node at (15,0.5){$\{s-1\}^{c}$};
\fill (15,0) circle (3pt);
\node at (9,-3.5){$\{s\}$};
\fill (9,-3) circle (3pt);
\node at (11,-3.5){$\{s+1\}$};
\fill (11,-3) circle (3pt);
\node at (13,-3){$\dots$};
\node at (15,-3.5){$\{s+t-1\}$};
\fill (15,-3) circle (3pt);
\draw [thick] (9,0) -- (9,-3);
\draw [thick] (9,0) -- (11,-3);
\draw [thick] (9,0) -- (15,-3);
\draw [thick] (11,0) -- (9,-3);
\draw [thick] (11,0) -- (11,-3);
\draw [thick] (11,0) -- (15,-3);
\draw [thick] (15,0) -- (9,-3);
\draw [thick] (15,0) -- (11,-3);
\draw [thick] (15,0) -- (15,-3);
\end{tikzpicture}
\caption{The induced copy of $\mathcal{K}_{s,t}$ in Case 2.}
\label{fig5}
\end{minipage}
\end{figure}
\noindent
\textbf{Case 2:} $\big\lvert{F\cap[s+t-1]}\big\rvert=s+t-1$.\\
We first take $\{1\}^{c},\dots,\{s-1\}^{c}\in\mathcal{F}_{1}$. Since $F\neq[n]$, the sets $F,\{1\}^{c},\dots,\{s-1\}^{c}$ are pairwise incomparable, forming the upper layer of the desired $\mathcal{K}_{s,t}$. On the other hand, we take $\{s\},\dots,\{s+t-1\}\in\mathcal{F}_{4}$ to form the lower layer. This gives us an induced copy of $\mathcal{K}_{s,t}$ in $\mathcal{F}\cup\{F\}$, as shown in Figure~\ref{fig5}.
\end{proof}

\begin{lemma}
\label{lem5}
Let $F\in2^{[n]}\backslash\mathcal{F}$ with $1\leq\big\lvert{F\cap[s+t-1]}\big\rvert\leq{t-1}$. Then $\mathcal{F}\cup\{F\}$ contains an induced copy of $\mathcal{K}_{s,t}$.
\end{lemma}
\begin{proof}
First we can assume without loss of generality that $F\notin\mathcal{G}_{1}\cup\mathcal{G}_{2}$, because by our construction of $\mathcal{F}_{5}$, $F\in\left(\mathcal{G}_{1}\cup\mathcal{G}_{2}\right)\backslash\mathcal{F}$ would imply that $\mathcal{F}\cup\{F\}$ contains an induced copy of $\mathcal{K}_{s,t}$. Let $A\in\binom{[s+t-1]}{t-1}$ with $F\cap[s+t-1]\subseteq{A}$.\\
\textbf{Case 1:} $\lvert{F}\rvert\leq{t}$.\\
Since $F\notin\mathcal{F}_{4}$, we actually have $2\leq\lvert{F}\rvert\leq{t}$. If $F\backslash[s+2t-1]=\emptyset$, then we have $F\in\binom{[s+2t-1]}{i}$ for some $2\leq{i}\leq{t}$. This contradicts the fact that $F\notin\mathcal{G}_{2}$. Therefore, $F\backslash[s+2t-1]\neq\emptyset$. We take distinct sets $X_{1},\dots,X_{t-1}$ of size $t+1$ from the lantern $\mathcal{L}_{t}\left(A\cup\{s+t\}\right)\subseteq\mathcal{F}_{3}$. Due to the same size, $X_{1},\dots,X_{t-1}$ are pairwise incomparable. We claim that $F,X_{1},\dots,X_{t-1}$ are also pairwise incomparable. Indeed, because of $\lvert{F}\rvert\leq{t}$, $F$ is not a superset of any of $X_{1},\dots,X_{t-1}$. Furthermore, by definition the first increment set of $\mathcal{L}_{t}\left(A\cup\{s+t\}\right)$ is $[s+t+1,s+2t-1]$, which means that $\bigcup_{i=1}^{t-1}X_{i}=A\cup[s+t,s+2t-1]\subseteq[s+2t-1]$ and hence $F$ is not a subset of any of $X_{1},\dots,X_{t-1}$. Therefore, $F,X_{1},\dots,X_{t-1}$ form a lower layer of our desired $\mathcal{K}_{s,t}$. On the other hand, we note that $F\cap[s+t-1]\subseteq{X_{1}\cap[s+t-1]}=\dots=X_{t-1}\cap[s+t-1]=A$ holds by Remark~\ref{rem1}. Thus one can take $\{x_{1}\}^{c},\dots,\{x_{s}\}^{c}\in\mathcal{F}_{1}$ with $x_{i}\in[s+t-1]\backslash{A}$ for all $i\in[s]$ as the upper layer, see Figure~\ref{fig6}.
\begin{figure}[H]
\begin{minipage}{0.49\textwidth}
\centering
\begin{tikzpicture}[scale=3/5]
\node at (-3,0.5){$\{x_{1}\}^{c}$};
\fill (-3,0) circle (3pt);
\node at (-1,0.5){$\{x_{2}\}^{c}$};
\fill (-1,0) circle (3pt);
\node at (1,0){$\dots$};
\node at (3,0.5){$\{x_{s}\}^{c}$};
\fill (3,0) circle (3pt);
\node at (-3,-3.5){$F$};
\fill (-3,-3) circle (3pt);
\node at (-1,-3.5){$X_{1}$};
\fill (-1,-3) circle (3pt);
\node at (1,-3){$\dots$};
\node at (3,-3.5){$X_{t-1}$};
\fill (3,-3) circle (3pt);
\draw [thick] (-3,0) -- (-3,-3);
\draw [thick] (-3,0) -- (-1,-3);
\draw [thick] (-3,0) -- (3,-3);
\draw [thick] (-1,0) -- (-3,-3);
\draw [thick] (-1,0) -- (-1,-3);
\draw [thick] (-1,0) -- (3,-3);
\draw [thick] (3,0) -- (-3,-3);
\draw [thick] (3,0) -- (-1,-3);
\draw [thick] (3,0) -- (3,-3);
\end{tikzpicture}
\caption{The induced copy of $\mathcal{K}_{s,t}$ in Case 1.}
\label{fig6}
\end{minipage}
\hfill
\begin{minipage}{0.49\textwidth}
\centering
\begin{tikzpicture}[scale=3/5]
\node at (9,0.5){$\{x_{1}\}^{c}$};
\fill (9,0) circle (3pt);
\node at (11,0.5){$\{x_{2}\}^{c}$};
\fill (11,0) circle (3pt);
\node at (13,0){$\dots$};
\node at (15,0.5){$\{x_{s}\}^{c}$};
\fill (15,0) circle (3pt);
\node at (9,-3.5){$F$};
\fill (9,-3) circle (3pt);
\node at (11,-3.5){$Y_{1}$};
\fill (11,-3) circle (3pt);
\node at (13,-3){$\dots$};
\node at (15,-3.5){$Y_{t-1}$};
\fill (15,-3) circle (3pt);
\draw [thick] (9,0) -- (9,-3);
\draw [thick] (9,0) -- (11,-3);
\draw [thick] (9,0) -- (15,-3);
\draw [thick] (11,0) -- (9,-3);
\draw [thick] (11,0) -- (11,-3);
\draw [thick] (11,0) -- (15,-3);
\draw [thick] (15,0) -- (9,-3);
\draw [thick] (15,0) -- (11,-3);
\draw [thick] (15,0) -- (15,-3);
\end{tikzpicture}
\caption{The induced $\mathcal{K}_{s,t}$ in Case 2.}
\label{fig7}
\end{minipage}
\end{figure}
\noindent
\textbf{Case 2:} $t<\lvert{F}\rvert<n-s$.\\
We can take distinct sets $Y_{1},\dots,Y_{t-1}$ of size $\lvert{F}\rvert$ from the lantern $\mathcal{L}_{t}\left(A\cup\{s+t\}\right)\subseteq\mathcal{F}_{3}$. Due to the same size, $F,Y_{1},\dots,Y_{t-1}$ are pairwise incomparable and form an adequate lower layer. Since $F\cap[s+t-1]\subseteq{Y_{1}\cap[s+t-1]}=\dots=Y_{t-1}\cap[s+t-1]=A$ still holds, we choose the same upper layer as in the previous case, namely, $\{x_{1}\}^{c},\dots,\{x_{s}\}^{c}\in\mathcal{F}_{1}$ with $x_{i}\in[s+t-1]\backslash{A}$ for all $i\in[s]$, see Figure~\ref{fig7}.\\
\textbf{Case 3:} $\lvert{F}\rvert\geq{n-s}$.\\
In this case, we must have $F=A\cup[s+t,n]=\left(A\cup\{s+t\}\right)\cup[s+t+1,n]$, which implies that $F\in\mathcal{L}_{t}\left(A\cup\{s+t\}\right)\subseteq\mathcal{F}_{3}$, a contradiction.
\end{proof}

\begin{lemma}
\label{lem6}
Let $F\in2^{[n]}\backslash\mathcal{F}$ with $t\leq\big\lvert{F\cap[s+t-1]}\big\rvert<s+t-1$. Then $\mathcal{F}\cup\{F\}$ contains an induced copy of $\mathcal{K}_{s,t}$.
\end{lemma}
\begin{proof}
As previously, we can assume that $F\notin\mathcal{G}_{1}\cup\mathcal{G}_{2}$. Let $A\in\binom{[s+t-1]}{t}$ with $A\subseteq{F\cap[s+t-1]}$.\\
\textbf{Case 1:} $t\leq\lvert{F}\rvert<n-s$.\\
If $\lvert{F}\rvert=t$, then $F=A\in\mathcal{L}^{s}(A)\subseteq\mathcal{F}_{2}$, a contradiction. If $t<\lvert{F}\rvert<n-s$, then we can take distinct sets $X_{1},\dots,X_{s-1}$ of size $\lvert{F}\rvert$ from the lantern $\mathcal{L}_{s}(A)\subseteq\mathcal{F}_{2}$. Due to the same size, $F,X_{1},\dots,X_{s-1}$ are pairwise incomparable and form the upper layer of our desired $\mathcal{K}_{s,t}$. As for the lower layer, we choose the singletons $\{x_{1}\},\dots,\{x_{t}\}\in\mathcal{F}_{4}$ with $x_{i}\in{A}$ for all $i\in[t]$, see Figure~\ref{fig8}.
\begin{figure}[H]
\begin{minipage}{0.49\textwidth}
\centering
\begin{tikzpicture}[scale=3/5]
\node at (-3,0.5){$F$};
\fill (-3,0) circle (3pt);
\node at (-1,0.5){$X_{1}$};
\fill (-1,0) circle (3pt);
\node at (1,0){$\dots$};
\node at (3,0.5){$X_{s-1}$};
\fill (3,0) circle (3pt);
\node at (-3,-3.5){$\{x_{1}\}$};
\fill (-3,-3) circle (3pt);
\node at (-1,-3.5){$\{x_{2}\}$};
\fill (-1,-3) circle (3pt);
\node at (1,-3){$\dots$};
\node at (3,-3.5){$\{x_{t}\}$};
\fill (3,-3) circle (3pt);
\draw [thick] (-3,0) -- (-3,-3);
\draw [thick] (-3,0) -- (-1,-3);
\draw [thick] (-3,0) -- (3,-3);
\draw [thick] (-1,0) -- (-3,-3);
\draw [thick] (-1,0) -- (-1,-3);
\draw [thick] (-1,0) -- (3,-3);
\draw [thick] (3,0) -- (-3,-3);
\draw [thick] (3,0) -- (-1,-3);
\draw [thick] (3,0) -- (3,-3);
\end{tikzpicture}
\caption{The induced copy of $\mathcal{K}_{s,t}$ in Case 1.}
\label{fig8}
\end{minipage}
\hfill
\begin{minipage}{0.49\textwidth}
\centering
\begin{tikzpicture}[scale=3/5]
\node at (9,0.5){$F$};
\fill (9,0) circle (3pt);
\node at (11,0.5){$Y_{1}$};
\fill (11,0) circle (3pt);
\node at (13,0){$\dots$};
\node at (15,0.5){$Y_{s-1}$};
\fill (15,0) circle (3pt);
\node at (9,-3.5){$\{x_{1}\}$};
\fill (9,-3) circle (3pt);
\node at (11,-3.5){$\{x_{2}\}$};
\fill (11,-3) circle (3pt);
\node at (13,-3){$\dots$};
\node at (15,-3.5){$\{x_{t}\}$};
\fill (15,-3) circle (3pt);
\draw [thick] (9,0) -- (9,-3);
\draw [thick] (9,0) -- (11,-3);
\draw [thick] (9,0) -- (15,-3);
\draw [thick] (11,0) -- (9,-3);
\draw [thick] (11,0) -- (11,-3);
\draw [thick] (11,0) -- (15,-3);
\draw [thick] (15,0) -- (9,-3);
\draw [thick] (15,0) -- (11,-3);
\draw [thick] (15,0) -- (15,-3);
\end{tikzpicture}
\caption{The induced copy of $\mathcal{K}_{s,t}$ in Case 2.}
\label{fig9}
\end{minipage}
\end{figure}
\noindent
\textbf{Case 2:} $\lvert{F}\rvert\geq{n-s}$.\\
First we observe that $F^{c}\not\subseteq[2s+t-1]$. Indeed, if $F^{c}$ is a singleton set, given that $\big\lvert{F\cap[s+t-1]}\big\rvert<s+t-1$, we have $F=\{x\}^{c}$ for some $x\in[s+t-1]$, which implies that $F\in\mathcal{F}_{1}$, a contraction. If $F^{c}$ is not a singleton set, then from $F\notin\mathcal{G}_{1}$ we can deduce that $F^{c}\not\subseteq[2s+t-1]$. Thus there exists some $x\in[2s+t,n]$ with $x\notin{F}$. Now we consider the distinct sets $Y_{1},\dots,Y_{s-1}\in\mathcal{L}^{s}(A)\subseteq\mathcal{F}_{2}$ of size $n-s-1$. By definition the last increment set of $\mathcal{L}^{s}(A)$ is $[s+t+1,2s+t-1]$. This implies $A\cup[2s+t,n]\subseteq{Y_{i}}$ for $i\in[s-1]$, meaning that $F$ is not a superset of any of $Y_{1},\dots,Y_{s-1}$. Moreover, as $F$ has the larger size, $F$ is not a subset of any of $Y_{1},\dots,Y_{s-1}$. Hence, $F,Y_{1},\dots,Y_{s-1}$ are pairwise incomparable and will serve as the upper layer of our desired $\mathcal{K}_{s,t}$. We construct the lower layer as in the preceding case, namely, choosing $\{x_{1}\},\dots,\{x_{t}\}\in\mathcal{F}_{4}$ with $x_{i}\in{A}$ for all $i\in[t]$, see Figure~\ref{fig9}.
\end{proof}

\begin{lemma}
\label{lem7}
The family $\mathcal{F}$ is induced $\mathcal{K}_{s,t}$-saturated.
\end{lemma}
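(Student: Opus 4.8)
The plan is to verify the two defining conditions of induced $\mathcal{K}_{s,t}$-saturation separately; both are already within reach given the preceding lemmas. First I would record that $\mathcal{F}$ is induced $\mathcal{K}_{s,t}$-free. This is immediate from the construction: $\mathcal{F}_{5}$ was defined to be a subfamily of $\mathcal{G}_{1}\cup\mathcal{G}_{2}$ that is \emph{maximal} subject to $\bigcup_{i=1}^{5}\mathcal{F}_{i}$ containing no induced copy of $\mathcal{K}_{s,t}$, and such a subfamily exists because Lemma \ref{lem3} guarantees that $\bigcup_{i=1}^{4}\mathcal{F}_{i}=\mathcal{F}'$ is induced $\mathcal{K}_{s,t}$-free (so the empty set is at least one valid choice). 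Note that the maximality of $\mathcal{F}_{5}$ itself is not needed for freeness; it is used only in the saturation argument, precisely via the reduction \enquote{$F\notin\mathcal{G}_{1}\cup\mathcal{G}_{2}$} made at the start of Lemmas \ref{lem5} and \ref{lem6}.

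Second, I would establish the saturation condition by a case analysis on the quantity $k:=\big\lvert F\cap[s+t-1]\big\rvert$ for an arbitrary $F\in2^{[n]}\backslash\mathcal{F}$. Since $0\leqslant k\leqslant s+t-1$, exactly one of three situations occurs: $k\in\{0,s+t-1\}$; or $1\leqslant k\leqslant t-1$; or $t\leqslant k\leqslant s+t-2$. These three ranges are precisely the hypotheses of Lemmas \ref{lem4}, \ref{lem5}, and \ref{lem6}, respectively, and each of those lemmas already produces an induced copy of $\mathcal{K}_{s,t}$ inside $\mathcal{F}\cup\{F\}$. Combining the three cases, every $F\in2^{[n]}\backslash\mathcal{F}$ creates an induced $\mathcal{K}_{s,t}$ when added to $\mathcal{F}$, which together with the freeness established above shows that $\mathcal{F}$ is induced $\mathcal{K}_{s,t}$-saturated.

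I do not anticipate a genuine obstacle at this stage. The only point requiring a moment of care is to confirm that the three intervals above genuinely exhaust $\{0,1,\dots,s+t-1\}$ and are pairwise disjoint, and in particular that the ranges handled by Lemmas \ref{lem5} and \ref{lem6} are nonempty; all of this relies only on the standing assumption $t\geqslant2$. The substantive content was carried out earlier: the freeness of $\mathcal{F}'$ in Lemmas \ref{lem1}--\ref{lem3}, and the explicit lantern-based constructions of the desired induced copies in Lemmas \ref{lem4}--\ref{lem6}. Once Lemma \ref{lem7} is in hand, I would then deduce Theorem \ref{thm1} by combining the size estimate $\sum_{i=1}^{4}\lvert\mathcal{F}_{i}\rvert\leqslant\bigl(\binom{s+t-1}{t}(s-1)+\binom{s+t-1}{t-1}(t-1)\bigr)n$ recorded in Section \ref{construction} with the trivial bound $\lvert\mathcal{F}_{5}\rvert\leqslant\lvert\mathcal{G}_{1}\rvert+\lvert\mathcal{G}_{2}\rvert$, the latter being a constant depending only on $s$ and $t$, and absorbing it into $c_{s,t}$.
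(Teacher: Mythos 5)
Your proposal is correct and follows essentially the same structure as the paper's proof: freeness comes from the construction (with $\mathcal{F}_{5}$ chosen to preserve $\mathcal{K}_{s,t}$-freeness), and saturation follows by partitioning $2^{[n]}\backslash\mathcal{F}$ according to $\lvert F\cap[s+t-1]\rvert$ and applying Lemmas~\ref{lem4}, \ref{lem5}, and \ref{lem6}. The added remarks on where the maximality of $\mathcal{F}_{5}$ is actually used, and on checking that the three ranges exhaust $\{0,\dots,s+t-1\}$, are accurate bits of bookkeeping that the paper leaves implicit.
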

\begin{proof}
We see already in the preceding subsection that $\mathcal{F}$ is induced $\mathcal{K}_{s,t}$-free. For any $F\in2^{[n]}\backslash\mathcal{F}$, by considering the cases in Lemmas~\ref{lem4},~\ref{lem5}, and~\ref{lem6}, we conclude that $\mathcal{F}\cup\{F\}$ contains an induced copy of $\mathcal{K}_{s,t}$. Therefore, $\mathcal{F}$ is induced $\mathcal{K}_{s,t}$-saturated.
\end{proof}

\subsection{Proof of Theorem~\ref{thm1}}
\begin{proof}[Proof of Theorem~\ref{thm1}]
For given $n,s,t\in\mathbb{N}$ with $s\geq{t}\geq2$ and $n\geq2s+t-1$, we construct the family $\mathcal{F}=\bigcup_{i=1}^{5}\mathcal{F}_{i}\subseteq2^{[n]}$ as aforementioned. Then by Lemma~\ref{lem7}, $\mathcal{F}$ is an induced $\mathcal{K}_{s,t}$-saturated family. Accordingly, 
\begin{equation*}
\mathrm{sat}^{*}(n,\mathcal{K}_{s,t})\leq\lvert\mathcal{F}\rvert\leq\sum_{i=1}^{4}\lvert\mathcal{F}_{i}\rvert+\lvert\mathcal{F}_{5}\rvert\leq\left(\binom{s+t-1}{t}(s-1)+\binom{s+t-1}{t-1}(t-1)\right)n+c_{s,t},
\end{equation*}
where $c_{s,t}$ is upper bounded by the size of $\mathcal{F}_{5}$ that only depends on $s$ and $t$.
\end{proof}

\section{Proof of Theorem~\ref{thm3}}
\label{proofofthm3}
Let $\mathcal{P}$ be a poset, which contains two incomparable elements $a,b\in\mathcal{P}$, such that $a$ and $b$ are smaller than every element in $\mathcal{P}\backslash\{a,b\}$. Recall that such elements $a$ and $b$ are referred to as the legs of $\mathcal{P}$. Moreover, we call $\mathcal{P}\backslash\{a,b\}$ the \textit{body} of $\mathcal{P}$.

Let $n\in\mathbb{N}$ and fix an arbitrary $\mathcal{F}\subseteq2^{[n]}$ that is induced $\mathcal{P}$-saturated. To prove Theorem~\ref{thm3}, it suffices to show that $\lvert\mathcal{F}\backslash\{\emptyset\}\rvert\geq{n}$. Indeed, because $\mathcal{P}$ has two legs, no induced copy of $\mathcal{P}$ would contain $\emptyset$. This implies that $\emptyset\in\mathcal{F}$, namely, $\lvert\mathcal{F}\rvert=\lvert\mathcal{F}\backslash\{\emptyset\}\rvert+1\geq{n+1}$.

If $\mathcal{F}$ contains all singletons, we immediately obtain $\lvert\mathcal{F}\backslash\{\emptyset\}\rvert\geq{n}$. Otherwise, for every singleton set $\{x\}\in2^{[n]}\backslash\mathcal{F}$, adding $\{x\}$ to $\mathcal{F}$ creates at least one induced copy of $\mathcal{P}$, denoted by $\mathcal{P}'$, and $\mathcal{P}'$ must contain $\{x\}$. Moreover, the singleton set $\{x\}$ must be one of the legs of $\mathcal{P}'$, as the only proper subset of $\{x\}$ is $\emptyset$. We say that $C\in\mathcal{F}$ is a \textit{partner} of $x$ if $\{x\}$ and $C$ form the legs of some induced copy of $\mathcal{P}$ in $\mathcal{F}\cup\left\{\{x\}\right\}$. Let $C_{x}$ be one of the largest partners of $x$, chosen with respect to their cardinality. In particular, $x\notin{C_{x}}$ holds since $\{x\}$ and $C_{x}$ are incomparable.

\begin{lemma}
\label{lem8}
Let $\{x\}\in2^{[n]}\backslash\mathcal{F}$, then $C_{x}\cup\{x\}\in\mathcal{F}\backslash\{\emptyset\}$.
\end{lemma}
\begin{proof}
First, due to the way we defined $C_{x}$, there exists an induced copy of $\mathcal{P}$ in $\mathcal{F}\cup\left\{\{x\}\right\}$, denoted by $\mathcal{P}_{1}$, whose legs are $\{x\}$ and $C_{x}$. Suppose $C_{x}\cup\{x\}\notin\mathcal{F}\backslash\{\emptyset\}$. Then adding $C_{x}\cup\{x\}$ to $\mathcal{F}$ creates an induced copy of $\mathcal{P}$, denoted by $\mathcal{P}_{2}$.\\
\textbf{Case 1:} $C_{x}\cup\{x\}$ lies in the body of $\mathcal{P}_{2}$.\\
Then the legs of $\mathcal{P}_{2}$ are subsets of $C_{x}\cup\{x\}$. Since $\{x\}$ and $C_{x}$ are the legs of $\mathcal{P}_{1}$, every set in the body of $\mathcal{P}_{1}$ is a superset of $C_{x}\cup\{x\}$. Therefore, the body of $\mathcal{P}_{1}$ and the legs of $\mathcal{P}_{2}$ together form an induced copy of $\mathcal{P}$ in $\mathcal{F}$, a contradiction.\\
\textbf{Case 2:} $C_{x}\cup\{x\}$ is a leg of $\mathcal{P}_{2}$.\\
Let $D\in\mathcal{F}$ be the other leg of $\mathcal{P}_{2}$, note that $C_{x}\cup\{x\}$ and $D$ are incomparable. If $C_{x}\nsubseteq{D}$, then we can replace the legs of $\mathcal{P}_{2}$ with $C_{x}$ and $D$, from which we obtain an induced copy of $\mathcal{P}$ in $\mathcal{F}$, a contradiction. If $C_{x}\subseteq{D}$, given that $D\nsubseteq{C_{x}}\cup\{x\}$, we have $x\notin{D}$ and $\lvert{D}\rvert>\lvert{C_{x}}\rvert$. Hence, we can replace the legs of $\mathcal{P}_{2}$ with $\{x\}$ and $D$, and obtain an induced copy of $\mathcal{P}$ in $\mathcal{F}\cup\{\{x\}\}$. But then $D$ is a larger partner of $x$ than $C_{x}$, contradicting the way we choose $C_{x}$.
\end{proof}

Now we define the function
\begin{equation*}
f:\,[n]\to\mathcal{F}\backslash\{\emptyset\},\,f(x):=\left\{
\begin{array}{ll}
\{x\}&\text{if }\{x\}\in\mathcal{F},\\
C_{x}\cup\{x\}&\text{if }\{x\}\notin\mathcal{F}.
\end{array}\right.
\end{equation*}

\begin{lemma}
\label{lem9}
$f$ is an injective function.
\end{lemma}
\begin{proof}
Take arbitrary $x,y\in[n]$ with $x\neq{y}$, we split our proof into three cases.\\
\textbf{Case 1:} $\{x\},\{y\}\in\mathcal{F}$.\\
By definition we have $f(x)=\{x\}\neq\{y\}=f(y)$.\\
\textbf{Case 2:} $\{x\}\notin\mathcal{F},\,\{y\}\in\mathcal{F}$.\\
Then it follows that $\lvert{f(x)}\rvert=\lvert{C_{x}\cup\{x\}}\rvert>1$ and $\lvert{f(y)}\rvert=\lvert\{y\}\rvert=1$, hence, $f(x)\neq{f(y)}$.\\
\textbf{Case 3:} $\{x\},\{y\}\notin\mathcal{F}$.\\
We suppose $f(x)=f(y)$, namely, $C_{x}\cup\{x\}=C_{y}\cup\{y\}$. Recall that $x\notin{C_{x}}$ and $y\notin{C_{y}}$, $C_{x}$ and $C_{y}$ must be distinct and of the same size. By the choice of $C_{x}$, there exists an induced copy of $\mathcal{P}$ in $\mathcal{F}\cup\left\{\{x\}\right\}$, denoted by $\mathcal{P}_{1}$, whose legs are $\{x\}$ and $C_{x}$. Since $C_{y}\subseteq{C_{x}}\cup\{x\}$, $C_{y}$ is also a subset of all members from the body of $\mathcal{P}_{1}$. Note that as $C_{x}$ and $C_{y}$ are incomparable due to the same size, we can replace the legs of $\mathcal{P}_{1}$ with $C_{x}$ and $C_{y}$, which gives an induced copy of $\mathcal{P}$ in $\mathcal{F}$, a contradiction.
\end{proof}

\begin{proof}[Proof of Theorem~\ref{thm3}]
Let $\mathcal{F}\subseteq2^{[n]}$ be an induced $\mathcal{P}$-saturated family with the smallest size. Define the function $f:[n]\to\mathcal{F}\backslash\{\emptyset\}$ as above. By Lemmas~\ref{lem8} and~\ref{lem9}, $f$ is a well-defined injective function, which implies that $\lvert\mathcal{F}\backslash\{\emptyset\}\rvert\geq{n}$. Moreover, because no induced copy of $\mathcal{P}$ contains $\emptyset$, we must have $\emptyset\in\mathcal{F}$. Consequently, $\mathrm{sat}^{*}(n,\mathcal{P})=\lvert\mathcal{F}\rvert\geq{n+1}$.
\end{proof}

\begin{remark}
\label{rem2}
If the body of $\mathcal{P}$ is nonempty and $\mathcal{P}$ does not contain an element which is larger than all other elements in $\mathcal{P}$, then one can show that $\mathrm{sat}^{*}(n,\mathcal{P})\geq\min\{2^{n},n+2\}$ by defining $f$ to be a function from $[n]$ to $\mathcal{F}\backslash\{\emptyset,[n]\}$.
\end{remark}

\noindent\textbf{Acknowledgements.} The author would like to thank Let\'{i}cia Mattos, Silas Rathke, and Patricija Sapokaite for valuable discussions during EXCILL IV at UIUC. The author also thanks Maria Axenovich, Paul Bastide, and the anonymous referees for helpful comments on the manuscript.

\end{document}